\def\figurename{Figure} 
\renewcommand{\fnum@figure}[1]{\figurename~\thefigure.}
\def\tablename{Table} 
\renewcommand{\fnum@table}[1]{\tablename~\thetable.}
\newtheorem{theorem}{Theorem}[section]
\newtheorem{lemma}[theorem]{Lemma}
\newtheorem{corollary}[theorem]{Corollary}
\newtheorem{proposition}[theorem]{Proposition}
\theoremstyle{definition}
\newtheorem{definition}[theorem]{Definition}
\theoremstyle{remark}
\newtheorem{remark}[theorem]{Remark}
\numberwithin{equation}{section}
\def\P{\mathbb P}
\def\R{\mathbb R}
\def\E{\mathbb E}
\def\E{\mathbb E}
\begin{document}
\title{ Multivalued stochastic partial differential-integral equations via
backward doubly stochastic differential equations driven by a
L\'{e}vy process \thanks{The first author is supported by the
National Natural Science Foundation of
 China (No. 10901003) and the Great Research Project of Natural
 Science Foundation of Anhui Provincial Universities (No. KJ2010ZD02).
 The second author is
supported by TWAS Research Grants to individuals (No. 09-100
RG/MATHS/AF/\mbox{AC-I}--UNESCO FR: 3240230311)}}
\author{Yon Ren$^1$\thanks{brightry@hotmail.com and
renyong@126.com}\; and Auguste Aman$^2$\thanks{augusteaman5@yahoo.fr}   \\
1.\;{\it Department of Mathematics, Anhui Normal University, Wuhu
241000, China}\;\;\;\;\;\;\;\;\;\;\;\;\;\;\;\;\;\;\;\;\;\\
2.\;{\it U.F.R Mathématiques et informatique, Universit\'{e} de Cocody,\,582 Abidjan 22, C\^{o}te d'Ivoire}
}
\date{}
\maketitle

\begin{abstract}
In this paper, we deal with a class of backward doubly stochastic
differential equations (BDSDEs, in short) involving subdifferential
operator of a convex function and driven by Teugels martingales
associated with a L\'{e}vy process. We show the existence and
uniqueness result by means of Yosida approximation. As an
application, we give the existence of stochastic viscosity solution
for a class of multivalued stochastic partial differential-integral
equations (MSPIDEs, in short).
\end{abstract}

\textbf{MSC 2000:} 60H10,\; 60H30\\
\textbf{Key Words}: {Backward doubly
stochastic differential
 equation\and subdifferential operator,\, L\'{e}vy process,\; Teugels martingale,\;
 multivalued stochastic partial differential-integral equation.

\section{Introduction}
\label{intro}
Backward stochastic differential equations (BSDEs, in short) related
to a multivalued maximal monotone operator defined by the
subdifferential of a convex function have first been introduced by
Gegout-Petit and Pardoux \cite{Geg}. Further, Pardoux and
R\u{a}\c{s}canu \cite{PR} proved the existence and uniqueness of the
solution of BSDEs, on a random (possibly infinite) time interval,
involving a subdifferential operator in order to give a
probabilistic interpretation for the viscosity solution of some
parabolic and elliptic variational inequalities. Following, Ouknine
\cite{Ou}, N'zi and Ouknine \cite{Mn1,Mn2}, Bahlali et al.
\cite{Ba1,Ba2} discussed this type of BSDEs driven by a Brownian motion
or the combination of a Brownian motion and an independent Poisson
point process under the conditions of Lipschitz, locally Lipschitz
or some monotone conditions on the coefficients.

 Recently, a
new class of BSDEs, named backward doubly stochastic differential
equations (BDSDEs, in short) involving a standard forward stochastic
integral and a backward stochastic integral has been introduced by
Pardoux and Peng \cite{PP} in order to give a probabilistic
representation for a class of quasilinear stochastic partial
differential equations (SPDEs, in short). Following it,  Matoussi
and Scheutzow \cite{Mou}, Bally and Matoussi \cite{Bally}, Zhang and
Zhao \cite{Zhang}, Aman and Mrhardy\cite{AM} and Boufoussi et al. \cite{B1,B2} studied this kind
of BDSDEs from different aspects.

The main tool in the theory of BSDEs is the martingale
representation theorem for a martingale which is adapted to the
filtration of a Brownian motion or a Poisson point process (Pardoux
and Peng \cite{PP1}, Tang and Li \cite{Ta}). Recently, Nualart and
Schoutens \cite{Nu1} gave a martingale representation theorem
associated with a L\'{e}vy process. This class of L\'{e}vy processes
includes Brownian motion, Poisson process, Gamma process, negative
binomial process and Meixner process as special cases. Based on
\cite{Nu1}, they showed the existence and uniqueness of the solution
for BSDEs driven by Teugels martingales associated with a L\'{e}vy
process in \cite{Nu2}. These results were important from a pure
mathematical point of view as well as from application point of view
in the world of finance. Specifically, they could be used for the
purpose of option pricing in a L\'{e}vy market and related partial
differential equation which provided an analogue of the famous
Black-Scholes formula. Motivated by \cite{PP} and \cite{Nu2}, Ren et
al. \cite{Ren2} considered a class of BDSDEs driven by Teugels
martingales and an independent Brownian motion, obtained the
existence and uniqueness of solutions to these equations, which
allowed to give a probabilistic interpretation for the solution to a
class of stochastic partial differential-integral equations (SPDIEs,
in short). Very recently, Ren and Fan \cite{ren3} derived the
existence and uniqueness of the solution for BSDEs driven by a
L\'{e}vy process involving a subdifferential operator and gave a
probabilistic interpretation for the solutions of a class of partial
differential-integral inclusions (PDIIs, in short).

Motivated by the above works, the first aim of this paper is to
derive existence and uniqueness result to the following BDSDE
involving subdifferential operator of a convex function and driven by
Teugels martingales associated with a L\'{e}vy process
\begin{eqnarray}\label{equation1}
\left\{
\begin{array}{ll}
\,{\rm d}Y_t+f(t,Y_t,Z_t)\,{\rm d}t+g(t,Y_t,Z_t)\,{\rm d}B_t\in
\partial \varphi(Y_t)\,{\rm d}t+\sum_{i=1}^{\infty}Z^{(i)}_t\,{\rm
d}H^{(i)}_t
,\ 0\leq t\leq T, \\\\
Y_T=\xi,
\end{array}\right.
\label{i1}
\end{eqnarray}
where $\partial \varphi$ is a subdifferential operators. The
integral with respect to $\{B_t\}$ is a backward Kunita-It\^{o}
integral (see Kunita \cite{Kunita}) and this one with respect to
$\{H^{(i)}_t\}_{i\geq 1}$ is a standard forward It\^{o} integral
(see Gong \cite{Gong}). Our method is based on the Yosida
approximation.

On the other hand, since the pioneering paper due to Buckdahn and Ma
\cite{BMa},\cite{BM1},\cite{BM2}, the notion of stochastic viscosity solution has
been intensely studied in the last ten year. Among others, we can
cite the work of Boufoussi et al. \cite{B1}, \cite{B2}, Aman and Mrhardy
\cite{AM}, Aman and Ren \cite{MY} and Ren et al. \cite{Yal}, etc. In
all these different works, authors have set existence results to
stochastic viscosity solution  of several types of SPDE. The tool is
entirely probabilistic and used the connection between these SPDE
and associated BDSDEs. Following this way, the second goal in this
paper is to give stochastic viscosity solution for multivalued
stochastic partial differential-integral equations (MSPDIEs, in
short)
\begin{eqnarray}
\left\{
\begin{array}{ll}
\left(\frac{\partial u}{\partial
t}(t,x)+\mathcal{L}u(t,x)+f\left(t,x,u(t,x),(u_k^1(t,x))_{k=1}^{\infty}\right)
+g(t,x,u(t,x))\dot{B}_t\right)\in\partial\varphi(x),\;\; 0<t<T,\;
x\in{\R}^{d}
\\\\
u(T,x)=u_0(x),\;\;x\in\R^{d},
\end{array}\right.
\label{i11}
\end{eqnarray}
where $\mathcal{L}$ is the second-order differential integral
operator of the diffusion process given by
\begin{eqnarray}
\mathcal{L}\phi(t,x)&=&m_1\sum_{i=1}^{d}\sigma_{i}(x)\frac{\partial\phi}
{\partial x_{i}}(t,x)+\frac{1}{2}\sum_{i=1}^{d}\sigma^2_{i}(x)
\frac{\partial^2\phi}{\partial x_i^2}(t,x)\nonumber\\&&+\int_{\R}
\left[\phi(t,x+\sigma(x)y)-\varphi(t,x)
-\langle\nabla\phi(t,x),\sigma(x)y\rangle\right]\nu({\rm d}y),\nonumber\\
\label{i2}
\end{eqnarray}
and
\begin{eqnarray*}
\phi^{1}_k(t,x)=\int_{\R}(\phi(t,x+\sigma(x)y)-\phi(t,x))p_k(y)\nu({\rm
d}y),
\end{eqnarray*}
with $\sigma$ a $\R^d$-valued function, coefficient of SDE driven by
the Lévy process $L$, $m_1=\E(L_1)$ and $p_k$ precise later  . Our method is also fully
probabilistic and uses connection between MSPDIE (\ref{i1}) and
BDSDE (\ref{equation1}) in Markovian framework.

The paper is organized as follows. In Section 2, we introduce some
preliminaries and notations. Section 3 is devoted to the existence and uniqueness result for BDSDEs involving
subdifferential operator of a convex function and driven by a L\'{e}vy
process. Finally, in section 4 we derive a probabilistic
representation (in stochastic viscosity sense) for the solution of a
class of MSPDIEs via BDSDEs proposed in Section 3.

\section{Preliminaries and notations}

Let $T>0$ be a fixed terminal time and $\{B_t:t\in [0,T]\}$ be a
standard $\mathbb{R}$-valued Brownian motion defined on a complete
probability space $(\Omega, \mathcal{F},\mathbb{P})$. Let us also
consider $\{L_t:t\in [0,T]\}$, a $\mathbb{R}$-valued L\'{e}v process
corresponding to a standard L\'{e}vy measure $\nu$, defined on a
complete probability space $(\Omega',\mathcal{F}',\mathbb{P}')$, with the following
characteristic function:
\begin{eqnarray*}
\mathbb{E}({\rm
e}^{iuL_t})=\exp\left[iaut-\frac{1}{2}\kappa^2u^2t+t\int_{\mathbb{R}}({\rm
e}^{iux}-1-iux1_{\{|x|<1\}})\nu({\rm d}x)\right],
\end{eqnarray*}
where $a\in \mathbb{R}, \kappa\geq 0$. Moreover, the Lévy measure
$\nu$ satisfies the following conditions:
\begin{enumerate}
\item $\int_{\mathbb{R}}(1\wedge y^2)\nu({\rm d}y)< \infty;$
\item $\int_{]-\varepsilon,\varepsilon[^c}{\rm e}^{\lambda |y|}\nu({\rm d}y)< \infty,$ for every $\varepsilon>0$
  and for some $\lambda>0$;
\end{enumerate}
which provides that $L_t$ has moments of all orders, i.e.
$\int_{-\infty}^{+\infty}|x|^i\nu({\rm d}x)<\infty,\ \forall i\geq
2$.

Let consider the product space $(\bar{\Omega},\bar{\mathcal{F}}, \bar{\P})$, defined by
\begin{eqnarray*}
\bar{\Omega}=\Omega\times\Omega';\;\;\;\;\;
\bar{\mathcal{F}}=\mathcal{F}\otimes\mathcal{F}'\;\;;\;\;\;\;\;
\bar{\P}=\P\otimes\P'.
\end{eqnarray*}
Further, random variables
$\xi(\omega),\;\omega\in \Omega$ and $\zeta(\omega'),\; \omega'\in
\Omega'$ can be considered as random variables on $\bar{\Omega} $ via
the following identifications:
\begin{eqnarray*}
\xi(\omega,\omega')=\xi(\omega);\,\,\,\,\,\zeta(\omega,\omega')=\zeta(\omega').
\end{eqnarray*}
In this fact, the processes $B$ and $L$ are assumed independent.
Next, denoting by $\mathcal{N}$ the totality of $\bar{\mathbb{P}}$-null sets of
$\bar{\mathcal{F}}$, and for each $t\in [0,T]$, we define
\begin{eqnarray*}
\mathcal{F}_t=\mathcal {F}_{t,T}^B\otimes \mathcal {F}_{t}^L\vee
\mathcal{N},
\end{eqnarray*}
where for any process $\{\eta_t\}, \mathcal{F}_{s,t}^\eta=\sigma\{\eta_r-\eta_s:s\leq r\leq t\}$ and $\mathcal {F}_{t}^\eta=\mathcal {F}_{0,t}^\eta$.
Since $\{\mathcal {F}_{t,T}^B\}_{t\geq 0}$ is decreasing and $\{\mathcal {F}_{t}^L\}_{t\geq 0}$ is increasing, the objet $\{\mathcal{F}_{t}\}_{t\geq}$ is neither increasing nor decreasing. Thus it does not a filtration.

We denote by $(H^{(i)})_{i\geq 1}$ the linear combination of so-called Teugels martingale $Y_t^{(i)}$ associated with the Lévy process $\{L_t:t\in [0,T]\}$. More
precisely
\begin{eqnarray*}
H^{(i)}_t=c_{i,i}Y_t^{(i)}+c_{i,i-1}Y_t^{(i-1)}+\cdots+c_{i,1}Y_t^{(1)},
\end{eqnarray*}
where for all $i\geq 1$, $Y_t^{(i)}=L_t^{(i)}-E[L_t^{(i)}]=L_t^{(i)}-tE[L_1^{(i)}]$.  For each $t\in[0,T],\ L_t^{(i)}$ is a power-jump processes defined as follows:
$L_t^{(1)}=L_t$ and $L_t^{(i)}=\sum_{0<s\leq t}(\triangle L_s)^i$
for $i \geq 2$. It was shown in Nualart and Schoutens \cite{Nu1}
that the coefficients $c_{i,k}$ correspond to the orthonormalization
of the polynomials
$q_{i-1}(x)=c_{i,i}x^{i-1}+c_{i,i-1}x^{i-2}+\cdots+c_{i,1}$ with
respect to the measure $\mu({\rm d}x)=x^2\nu({\rm
d}x)+\kappa^2\delta_0({\rm d}x)$:
\begin{eqnarray*}
\int_{\R}q_n(x)q_m(x)\mu(dx)=0\;\; \mbox{if}\;\; n\neq m\,\,\, \mbox{and}\;\; \int_{\R}q^2_n(x)\mu(dx)=1.
\end{eqnarray*}
We set
\begin{eqnarray*}
p_k(x)=xq_{k-1}(x).
\end{eqnarray*}
The martingales $(H^{(i)})_{i\geq
1}$ can be chosen to be pairwise strongly orthonormal martingales,
i.e. $[H^{(i)},H^{(j)}]=0, i\neq j,$ and
$\{[H^{(i)},H^{(i)}]_t-t\}_{t\geq 0}$ are uniformly integrable
martingales with initial value 0 and
$\left<H^{(i)},H^{(j)}\right>_t=\delta_{ij}t$.

\begin{remark}\rm \label{re2.1} The case of $\nu=0$
corresponds to the classic Brownian case and all non-zero degree
polynomials $q_i(x)$ will vanish, giving $H_t^{(i)}=0,i=2,3,\cdots,$
i.e. all power jump processes of order strictly greater than one
will be equal to zero. If $\nu$ only has mass at 1, we have the
Poisson case; here also $H_t^{(i)}=0,i=2,3,\cdots,$ i.e. all power
jump processes will be the same, and equal to the original Poisson
process. Both cases are degenerate in this L\'{e}vy framework.
\end{remark}

Let us introduce the following appropriate spaces:
\begin{itemize}
\item $\ell^2=\left\{x=(x_n)_{n\geq1};\ \|x\|_{\ell^2}=
\left(\sum_{n=1}^\infty|x_n|^2\right)^{1/2}<\infty\right\}$.

\item $\mathcal{H}^2$ the subspace of the $\mathcal{F}_t%
$-measurable and $\mathbb{R}$-valued processes $(Y_t)_{t\in[0,T]}$
such that $$\|Y\|^2_{\mathcal{H}^2}
=\mathbb{E}\int_0^T|Y_{t}|^{2}\,{\rm d}t<+\infty.
$$
\item $S^2$ the subspace of the $\mathbb{R}$-valued, $\mathcal{F}_t%
$-measurable, right continuous left limited (rcll, in short)
processes $(Y_t)_{t\in[0,T]}$ such that $$\|Y\|^2_{S^2}
=\mathbb{E}\left(\sup_{0\leq t\leq T}|Y_{t}|^{2}\right)<+\infty. $$

\item $\mathcal{P}^{2}(l^2)$ the space of jointly predictable processes $(Z)_{t\in[0,T]}$ taking values in $\ell^2$ such that
$$
\|Z\|^2_{\mathcal{P}^{2}(l^2)}=\mathbb{E}\int_0^T\|Z_s\|^2_{\ell^2}\,{\rm
d}s= \sum_{i=1}^\infty \mathbb{E}\int_0^T|Z_s^{(i)}|^2\,{\rm
d}s<\infty.$$
\end{itemize}
Now, we give the following assumptions:
\begin{itemize}
 \item[(H1)] The coefficients $f:[0,T]\times \Omega \times \mathbb{R}\times \ell^2
 \rightarrow \mathbb{R}$ and $g:[0,T]\times \Omega \times \mathbb{R}\times \ell^2
 \rightarrow \mathbb{R}$
satisfy, for all $t\in [0,T],y \in \mathbb{R}$ and $z\in \ell^2$,
\begin{itemize}
\item[(i)] $f(t,\cdot,y,z)$ and $g(t,\cdot,y,z)$ are $\mathcal{F}_t$-measurable,
\item[(ii)] $f(\cdot,0,0),\ g(\cdot,0,0)\in \mathcal{H}^2$;
\end{itemize}
 \item[(H2)] There exists some constants $C>0$ and $0< \alpha<1$ such
 that for every
$(t,\omega)\in [0,T]\times\Omega,(y_1,z_1),(y_2,z_2)\in
\mathbb{R}\times \ell^2$
$$|f(t,\omega,y_1,z_1)-f(t,\omega,y_2,z_2)|^2\leq C\left(|y_1-y_2|^2+\|z_1-z_2\|_{\ell^2}^2\right),$$
$$|g(t,\omega,y_1,z_1)-g(t,\omega,y_2,z_2)|^2\leq
C|y_1-y_2|^2+\alpha\|z_1-z_2\|_{\ell^2}^2;$$
 \item[(H3)] Let
$\varphi:\mathbb{R}\rightarrow (-\infty,+\infty]$ be a proper lower
semi continuous convex function satisfying $\varphi(y)\geq
\varphi(0)=0$;
 \item[(H4)] The terminal value $\xi\in L^2(\Omega,\mathcal{F}_T,\mathbb{P})$
 satisfies
\begin{eqnarray*}
\mathbb{E}\left(|\xi|^2+\varphi(\xi)\right)<\infty.
\end{eqnarray*}
\end{itemize}
Define:
\begin{description}
\item ${\rm Dom}(\varphi)=\left \{u\in \mathbb{R} :\varphi(u)<+\infty \right\},$

\item $\partial \varphi(u)= \{u^\ast\in
\mathbb{R}:\left<u^\ast,v-u\right>+\varphi(u)\leq \varphi(v),\
\mbox{for all}\ v\in \mathbb{R}\},$

\item ${\rm Dom}(\partial \varphi))= \{u\in \mathbb{R}:\partial \varphi(u) \neq
\emptyset\},$

\item ${\rm Gr}(\partial \varphi))= \{(u,u^\ast)\in \mathbb{R}^2: u\in {\rm
Dom} (\partial \varphi), u^\ast \in \partial \varphi (u)\}.$
\end{description}
Now, we introduce a multi-valued maximal monotone operator on
$\mathbb{R}$ defined by the subdifferential of the above function
$\varphi$.

For all $x\in \mathbb{R}$, define
$$\varphi_\varepsilon(x)=\min_{y}\left(\frac{1}{2}|x-y|^2+\varepsilon\varphi(y)\right)=\frac{1}{2}\left|x-J_\varepsilon(x)\right|^2
+\varepsilon \varphi(J_\varepsilon x),$$ where
$J_\varepsilon(x)=(I+\varepsilon \partial \varphi)^{-1}(x)$ is
called the resolvent of the monotone operator $A=\partial \varphi$.
Then, we have the following Proposition which appeared in Brezis
\cite{Br}.
\begin{proposition}\label{pro2.1}
\begin{enumerate}
\item[\rm(1)]  $\varphi_\varepsilon:\mathbb{R}\rightarrow \mathbb{R}$ is a convex
function with Lipschitz continuous derivatives;
\item[\rm(2)] $\forall x\in \mathbb{R},
\frac{1}{\varepsilon}D\varphi_\varepsilon(x)=\frac{1}{\varepsilon}\partial
\varphi_\varepsilon(x)=\frac{1}{\varepsilon}\left(x-J_\varepsilon(x)\right)\in
\partial \varphi(J_\varepsilon(x));$
\item[\rm(3)] $\forall x,y\in \mathbb{R},
|J_\varepsilon(x)-J_\varepsilon(y)|\leq |x-y|;$
\item[\rm(4)] $\forall x\in \mathbb{R}, 0\leq \varphi_\varepsilon(x)\leq
\left<D\varphi_\varepsilon(x),x\right>;$
\item[\rm(5)] $\forall x,y\in \mathbb{R}\ \mbox{and} \ \varepsilon,\delta>0,\,
\left<\frac{1}{\varepsilon}D\varphi_\varepsilon(x)-\frac{1}{\delta}D\varphi_\delta(y),x-y\right>\geq-\left(\frac{1}{\varepsilon}
 +\frac{1}{\delta}\right)|D\varphi_\varepsilon(x)||D\varphi_\delta(y)|$.
 \end{enumerate}
\end{proposition}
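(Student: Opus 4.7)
These five statements are the classical properties of the Moreau--Yosida regularization of a proper convex lower semi-continuous function. My plan is to derive all of them from two ingredients: the first-order optimality condition for the infimal convolution that defines $\varphi_\varepsilon$, and the monotonicity of the multi-valued operator $\partial\varphi$ guaranteed by (H3).

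First I would verify that, for each fixed $x\in\R$, the map $y\mapsto \tfrac{1}{2}|x-y|^{2}+\varepsilon\varphi(y)$ is proper, strictly convex, lower semi-continuous and coercive (the quadratic term dominates), so its minimum is attained at a unique point that I identify with $J_\varepsilon(x)$. The optimality condition $0\in -(x-J_\varepsilon(x))+\varepsilon\,\partial\varphi(J_\varepsilon(x))$ then immediately yields $\tfrac{1}{\varepsilon}(x-J_\varepsilon(x))\in\partial\varphi(J_\varepsilon(x))$; together with a standard envelope argument (justified by the strict convexity of the inner problem) this also gives $D\varphi_\varepsilon(x)=x-J_\varepsilon(x)$, completing part (2). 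For parts (1) and (3), I would apply monotonicity of $\partial\varphi$ to the two selections $\tfrac{1}{\varepsilon}(x-J_\varepsilon(x))$ and $\tfrac{1}{\varepsilon}(y-J_\varepsilon(y))$; this produces the firm non-expansiveness inequality $|J_\varepsilon(x)-J_\varepsilon(y)|^{2}\leq \langle x-y,\,J_\varepsilon(x)-J_\varepsilon(y)\rangle$, from which (3) follows by Cauchy--Schwarz, and the $1$-Lipschitz property of $D\varphi_\varepsilon=\mathrm{Id}-J_\varepsilon$ follows from the same identity, giving (1).

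For part (4), the bound $\varphi_\varepsilon(x)\geq 0$ is immediate from $\varphi\geq 0$. For the upper bound, I would note that (H3) forces $0\in\partial\varphi(0)$, hence $J_\varepsilon(0)=0$ by uniqueness and thus $\varphi_\varepsilon(0)=0$; the subgradient inequality for the convex function $\varphi_\varepsilon$ at the point $x$ evaluated at $0$ then reads $0=\varphi_\varepsilon(0)\geq \varphi_\varepsilon(x)+\langle D\varphi_\varepsilon(x),\,-x\rangle$, which is exactly the claim. Finally, for part (5), setting $a=D\varphi_\varepsilon(x)=x-J_\varepsilon(x)$ and $b=D\varphi_\delta(y)=y-J_\delta(y)$, the identity $J_\varepsilon(x)-J_\delta(y)=(x-y)-(a-b)$ combined with monotonicity of $\partial\varphi$ applied to $\tfrac{a}{\varepsilon}\in\partial\varphi(J_\varepsilon(x))$ and $\tfrac{b}{\delta}\in\partial\varphi(J_\delta(y))$ gives
$$\Bigl\langle \tfrac{a}{\varepsilon}-\tfrac{b}{\delta},\,x-y\Bigr\rangle \;\geq\; \tfrac{|a|^{2}}{\varepsilon}+\tfrac{|b|^{2}}{\delta}-\Bigl(\tfrac{1}{\varepsilon}+\tfrac{1}{\delta}\Bigr)\langle a,b\rangle,$$
so discarding the non-negative square terms and using $\langle a,b\rangle\leq |a||b|$ delivers the stated inequality.

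None of the steps is genuinely difficult -- the result is classical and the excerpt attributes it to Brezis -- but the most delicate point is justifying the differentiability of $\varphi_\varepsilon$ with the explicit formula $D\varphi_\varepsilon(x)=x-J_\varepsilon(x)$, since it is precisely this identification that converts the subdifferential inclusion (2) into the gradient-level assertions used for (1), (4) and (5). Once that envelope-type computation is in hand, the rest of the proposition is a routine application of monotonicity and the subgradient inequality.
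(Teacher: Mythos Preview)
Your argument is correct and is the standard route to these Moreau--Yosida facts: the optimality condition for the infimal convolution gives (2), monotonicity of $\partial\varphi$ yields firm non-expansiveness of $J_\varepsilon$ and hence (1) and (3), the normalization $\varphi(0)=0$ from (H3) combined with the subgradient inequality gives (4), and the algebraic decomposition $J_\varepsilon(x)-J_\delta(y)=(x-y)-(a-b)$ together with monotonicity gives (5). I checked the computation for (5) and the discarded terms $\tfrac{|a|^2}{\varepsilon}+\tfrac{|b|^2}{\delta}$ are indeed non-negative, so the chain of inequalities goes through.

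There is nothing to compare with, however: the paper does not prove Proposition~\ref{pro2.1} at all. It merely states the result and attributes it to Brezis~\cite{Br}. Your write-up therefore supplies what the paper omits, and your identification of the envelope step $D\varphi_\varepsilon(x)=x-J_\varepsilon(x)$ as the only point requiring care is apt; everything downstream is, as you say, routine monotonicity.
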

We first give the definition of BDSDEs involving subdifferential
operator of a convex function and driven by Lévy process.
 \begin{definition}\rm \label{def2.1}
We call solution of BDSDE $(\xi,f,g,\varphi)$ a triple of $(Y,U,Z)$
of progressively measurable processes  such that
\begin{enumerate}
\item $(Y,Z)\in S^2\times \mathcal{P}^2(l^2),\ U\in \mathcal{H}^2$;
\item $(Y_t,U_t)\in \partial \varphi, \ \,{\rm d}\mathbb{P}\otimes \,{\rm d}t\mbox{-a.e. on}\ [0,T];$
\item $ \displaystyle
Y_t+\int_t^TU_s\,{\rm d}s =\xi+\int_t^Tf(s,Y_s,Z_s)\,{\rm
d}s+\int_t^Tg(s,Y_s,Z_s)\,{\rm d}B_s-
\sum_{i=1}^\infty\int_t^TZ_s^{(i)}\,{\rm d}H_s^{(i)}, 0\leq t \leq
T.$
\end{enumerate}
\end{definition}
\section{Existence and uniqueness result for BDSDE driven by Lévy process}
The first result of the paper is the following theorem:
\begin{theorem}\label{thm3.1}
Assume the assumptions of {\rm(H1)--(H4)} hold. Then, the BDSDE
$(\xi,f,g,\varphi)$ has a unique solution.
\end{theorem}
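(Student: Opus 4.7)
The approach is to approximate the multi-valued term $\partial\varphi$ by its Yosida regularization $\tfrac{1}{\varepsilon}D\varphi_\varepsilon$ and construct the solution as a limit. For each $\varepsilon>0$, consider the classical BDSDE
\begin{equation*}
Y^\varepsilon_t = \xi + \int_t^T\bigl[f(s,Y^\varepsilon_s,Z^\varepsilon_s) - \tfrac{1}{\varepsilon}D\varphi_\varepsilon(Y^\varepsilon_s)\bigr]\,\mathrm{d}s + \int_t^T g(s,Y^\varepsilon_s,Z^\varepsilon_s)\,\mathrm{d}B_s - \sum_{i\geq 1}\int_t^T Z^{\varepsilon,(i)}_s\,\mathrm{d}H^{(i)}_s.
\end{equation*}
Since $\tfrac{1}{\varepsilon}D\varphi_\varepsilon$ is Lipschitz by Proposition~\ref{pro2.1}(1) and (3), the modified generator $(y,z)\mapsto f(\cdot,y,z) - \tfrac{1}{\varepsilon}D\varphi_\varepsilon(y)$ still satisfies (H1)--(H2), and existence and uniqueness of $(Y^\varepsilon,Z^\varepsilon)\in S^2\times\mathcal{P}^2(\ell^2)$ follow from the BDSDE theory for Teugels martingales and an independent Brownian motion developed in Ren et al.~\cite{Ren2}.

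Setting $U^\varepsilon_t := \tfrac{1}{\varepsilon}D\varphi_\varepsilon(Y^\varepsilon_t)$, the next step is to derive uniform-in-$\varepsilon$ bounds on $(Y^\varepsilon,Z^\varepsilon,U^\varepsilon)$. Applying It\^o's formula to $|Y^\varepsilon_t|^2$ and using (H2) together with the strict inequality $\alpha<1$ (to absorb the $\|Z^\varepsilon\|^2$ contribution produced by $g$), the orthonormality $\langle H^{(i)},H^{(j)}\rangle_t=\delta_{ij}t$ of the Teugels martingales, and the sign property $\langle U^\varepsilon_s,Y^\varepsilon_s\rangle \geq \varphi_\varepsilon(Y^\varepsilon_s)\geq 0$ of Proposition~\ref{pro2.1}(4), Gronwall's lemma yields $\sup_\varepsilon\bigl(\|Y^\varepsilon\|_{S^2}+\|Z^\varepsilon\|_{\mathcal{P}^2(\ell^2)}\bigr)<\infty$. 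A separate application of It\^o's formula, this time to $\varphi_\varepsilon(Y^\varepsilon_t)$, together with Proposition~\ref{pro2.1}(2) and the terminal bound $\mathbb{E}\varphi(\xi)<\infty$ from (H4), then furnishes $\sup_\varepsilon\|U^\varepsilon\|_{\mathcal{H}^2}<\infty$.

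The core of the argument is to show that $(Y^\varepsilon,Z^\varepsilon,U^\varepsilon)$ is Cauchy as $\varepsilon\to 0$. Applying It\^o's formula to $|Y^\varepsilon_t - Y^\delta_t|^2$, the cross term $\langle U^\varepsilon_s - U^\delta_s,\, Y^\varepsilon_s - Y^\delta_s\rangle$ is controlled via Proposition~\ref{pro2.1}(5), producing a remainder of order $(\varepsilon+\delta)$ times the uniform $\mathcal{H}^2$-bound on $U^\varepsilon, U^\delta$; the $g$-contribution is absorbed using $\alpha<1$, and Gronwall's lemma closes the estimate. This yields Cauchy convergence of $(Y^\varepsilon,Z^\varepsilon)$ in $S^2\times\mathcal{P}^2(\ell^2)$ and of $U^\varepsilon$ in $\mathcal{H}^2$. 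I expect this Cauchy estimate to be the main technical obstacle: the interplay between the backward Kunita--It\^o integral with respect to $B$, the forward Teugels-martingale integrals, and Brezis's inequality requires careful bookkeeping, and the smallness constant $\alpha<1$ plays an essential role.

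Passing to the limit produces $(Y,Z,U)\in S^2\times\mathcal{P}^2(\ell^2)\times\mathcal{H}^2$ satisfying the integral identity in Definition~\ref{def2.1}. The inclusion $U_t\in\partial\varphi(Y_t)$, $\mathrm{d}\mathbb{P}\otimes\mathrm{d}t$-a.e.\ on $[0,T]$, is extracted by passing to the limit in
\begin{equation*}
\langle U^\varepsilon_s,\,v - J_\varepsilon(Y^\varepsilon_s)\rangle + \varphi(J_\varepsilon(Y^\varepsilon_s)) \leq \varphi(v),\qquad v\in\mathbb{R},
\end{equation*}
using $|Y^\varepsilon - J_\varepsilon(Y^\varepsilon)| = \varepsilon|U^\varepsilon|\to 0$ in $\mathcal{H}^2$ (so $J_\varepsilon(Y^\varepsilon)\to Y$) together with the lower semicontinuity of $\varphi$. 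Uniqueness is a short It\^o-formula argument on the square of the difference of two solutions: the monotonicity of $\partial\varphi$ eliminates the $U$-terms, $\alpha<1$ controls the $g$-term, and Gronwall's lemma concludes.
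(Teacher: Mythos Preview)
Your overall scheme matches the paper's: Yosida approximation, a priori bounds, Cauchy estimate via Proposition~\ref{pro2.1}(5), limit identification, and uniqueness by monotonicity. Two steps, however, are stated too optimistically.

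First, the bound on $U^\varepsilon$. You propose to apply It\^o's formula to $\varphi_\varepsilon(Y^\varepsilon_t)$, but $\varphi_\varepsilon$ is only $C^{1,1}$ (Proposition~\ref{pro2.1}(1)), and $Y^\varepsilon$ is a jump semimartingale, so the classical It\^o formula does not apply directly; you would need an approximation argument or a Meyer--It\^o-type formula, neither of which you mention. The paper sidesteps this by a discrete subdifferential trick: partition $[0,T]$, use $\varphi_\varepsilon(Y^\varepsilon_{t_{i+1}})\geq \varphi_\varepsilon(Y^\varepsilon_{t_i})+(Y^\varepsilon_{t_{i+1}}-Y^\varepsilon_{t_i})D\varphi_\varepsilon(Y^\varepsilon_{t_i})$, plug in the equation for the increment, sum and pass to the limit. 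This produces exactly the estimate you want without any smoothness beyond convexity.

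Second, and more serious, the convergence of $U^\varepsilon$. The It\^o computation for $|Y^\varepsilon-Y^\delta|^2$ contains no term of the form $\int|U^\varepsilon_s-U^\delta_s|^2\,\mathrm{d}s$; Proposition~\ref{pro2.1}(5) only lower-bounds $\langle U^\varepsilon_s-U^\delta_s,\,Y^\varepsilon_s-Y^\delta_s\rangle$, so the estimate gives Cauchy convergence of $(Y^\varepsilon,Z^\varepsilon)$ but \emph{not} of $U^\varepsilon$ in $\mathcal{H}^2$. The paper handles this correctly: it shows that $\bar U^\varepsilon_t:=\int_0^t U^\varepsilon_s\,\mathrm{d}s$ is Cauchy in $S^2$ (by rearranging the equation and using the convergence of $Y^\varepsilon,Z^\varepsilon$), then invokes the uniform bound $\sup_\varepsilon\|U^\varepsilon\|_{\mathcal{H}^2}<\infty$ to extract a weak limit $U$ in $L^2(\Omega;H^1[0,T])$ with $\bar U_t=\int_0^t U_s\,\mathrm{d}s$. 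Your identification of the subdifferential inclusion then goes through with weak (not strong) convergence of $U^\varepsilon$, since $J_\varepsilon(Y^\varepsilon)\to Y$ strongly and the pairing of a weakly convergent sequence with a strongly convergent one passes to the limit; the paper cites Lemma~5.8 of \cite{Geg} for this step.
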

For the prove of this theorem, let us consider the following BDSDEs:
\begin{eqnarray}\label{eq1}
Y_t^\varepsilon+\frac{1}{\varepsilon}\int_t^TD\varphi_\varepsilon(Y_s^\varepsilon)\,{\rm
d}s
&=&\xi+\int_t^Tf(s,Y_s^\varepsilon,Z_s^\varepsilon)\,{\rm d}s+\int_t^Tg(s,Y_s^\varepsilon,Z_s^\varepsilon)\,{\rm d}B_s\nonumber\\
&&-\sum_{i=1}^\infty\int_t^TZ_s^{\varepsilon,(i)}\,{\rm
d}H_s^{(i)},\ 0\leq t \leq T,
\end{eqnarray}
where $\varphi_\varepsilon$ is the Yosida approximation of the
operator $A=\partial\varphi$. Since
$\frac{1}{\varepsilon}D\varphi_\varepsilon(Y_s^\varepsilon)$ is
Lipschitz continuous, it is known from a recent result of Ren et al.
\cite{Ren2}, that Eq. (\ref{eq1}) has a unique solution $
(Y^\varepsilon,Z^\varepsilon)\in
 S^2\times \mathcal{P}^2(l^2)$.

Setting
$U_t^\varepsilon=\frac{1}{\varepsilon}D\varphi_\varepsilon(Y_t^\varepsilon),
\ 0\leq t \leq T,$ our aim is to prove that the sequence
$(Y^\varepsilon,U^\varepsilon,Z^\varepsilon)$ converges to a
sequence $(Y,U,Z)$ which is the desired solution of the BDSDEs.

In the sequel, $C>0$ is a constant which can change its value from
line to line. Firstly, we give a prior estimates on the solution.
\begin{lemma}\label{lam3.1}
Assume the assumptions of {\rm(H1)--(H4)} hold. Then, there exists a
constant $C_1>0$ such that for all $\varepsilon>0$
$$\mathbb{E}\left(\sup_{0\leq t \leq
T}|Y_t^\varepsilon|^2+\int_0^T\|Z_s^\varepsilon\|_{\ell^2}^2\,{\rm
d}s\right)\leq C_1.$$
\end{lemma}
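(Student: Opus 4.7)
The plan is to apply the It\^o formula to $|Y_t^\varepsilon|^2$ for the approximating equation (\ref{eq1}), exploit the monotonicity property of the Yosida approximation (Proposition~\ref{pro2.1}(4)) to discard the subdifferential term, absorb the $\|Z^\varepsilon\|_{\ell^2}^2$ contributions into the left-hand side using the condition $\alpha<1$ in (H2), and finally pass through Gronwall and BDG to get both estimates uniformly in $\varepsilon$.

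Concretely, rewriting (\ref{eq1}) as a standard BDSDE driven by $B$ (backward integrator) and $(H^{(i)})$ (forward integrators), the It\^o formula of Pardoux--Peng type (together with the orthonormality $\langle H^{(i)},H^{(j)}\rangle_t=\delta_{ij}t$) yields, for $0\le t\le T$,
\begin{eqnarray*}
|Y_t^\varepsilon|^2 + \sum_{i=1}^\infty\!\int_t^T\!|Z_s^{\varepsilon,(i)}|^2\,{\rm d}s
&=& |\xi|^2 + 2\!\int_t^T Y_s^\varepsilon f(s,Y_s^\varepsilon,Z_s^\varepsilon)\,{\rm d}s
- 2\!\int_t^T Y_s^\varepsilon U_s^\varepsilon\,{\rm d}s\\
&& + \int_t^T |g(s,Y_s^\varepsilon,Z_s^\varepsilon)|^2\,{\rm d}s
+ 2\!\int_t^T Y_s^\varepsilon g(s,Y_s^\varepsilon,Z_s^\varepsilon)\,{\rm d}B_s\\
&& - 2\sum_{i=1}^\infty\!\int_t^T Y_s^\varepsilon Z_s^{\varepsilon,(i)}\,{\rm d}H_s^{(i)}.
\end{eqnarray*}
By Proposition~\ref{pro2.1}(4), $Y_s^\varepsilon U_s^\varepsilon = \tfrac{1}{\varepsilon}Y_s^\varepsilon D\varphi_\varepsilon(Y_s^\varepsilon)\ge \tfrac{1}{\varepsilon}\varphi_\varepsilon(Y_s^\varepsilon)\ge 0$, so the $U^\varepsilon$-term is nonnegative and can be dropped from the right-hand side.

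Next I estimate the deterministic terms. Using (H2) and the elementary inequalities $2ab\le \eta a^2+\eta^{-1}b^2$ and $(a+b)^2\le (1+\eta)a^2+(1+\eta^{-1})b^2$, one has
\begin{eqnarray*}
2|Y_s^\varepsilon||f(s,Y_s^\varepsilon,Z_s^\varepsilon)| &\le& \bigl(1+2C+2C/\mu\bigr)|Y_s^\varepsilon|^2 + \mu\|Z_s^\varepsilon\|_{\ell^2}^2 + |f(s,0,0)|^2,\\
|g(s,Y_s^\varepsilon,Z_s^\varepsilon)|^2 &\le& (1+\eta)\bigl[C|Y_s^\varepsilon|^2+\alpha\|Z_s^\varepsilon\|_{\ell^2}^2\bigr] + (1+\eta^{-1})|g(s,0,0)|^2,
\end{eqnarray*}
for arbitrary $\mu,\eta>0$. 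Since $\alpha<1$, I pick $\eta$ small so that $(1+\eta)\alpha<1$ and then $\mu$ small so that $\beta:=1-(1+\eta)\alpha-\mu>0$. Taking expectation (the stochastic integrals against $B$ and against $H^{(i)}$ are true martingales because $Y^\varepsilon\in S^2$ and $Z^\varepsilon\in \mathcal{P}^2(\ell^2)$) and combining, I obtain
\begin{equation*}
\mathbb{E}|Y_t^\varepsilon|^2 + \beta\,\mathbb{E}\!\int_t^T\|Z_s^\varepsilon\|_{\ell^2}^2\,{\rm d}s
\le \mathbb{E}|\xi|^2 + K\!\int_t^T\mathbb{E}|Y_s^\varepsilon|^2\,{\rm d}s + K\,\mathbb{E}\!\int_0^T\!\bigl[|f(s,0,0)|^2+|g(s,0,0)|^2\bigr]\,{\rm d}s,
\end{equation*}
for some constant $K$ independent of $\varepsilon$. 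Gronwall's lemma gives $\sup_{t\in[0,T]}\mathbb{E}|Y_t^\varepsilon|^2\le C$, and then plugging back yields $\mathbb{E}\!\int_0^T\|Z_s^\varepsilon\|_{\ell^2}^2\,{\rm d}s\le C$.

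The remaining step is the supremum bound on $Y^\varepsilon$. Returning to the It\^o identity (still without the $U^\varepsilon$-term) and taking $\sup_{t\in[0,T]}$ before expectation, I apply the Burkholder--Davis--Gundy inequality to the two martingale parts: for the $B$-integral,
\begin{equation*}
\mathbb{E}\sup_{t\in[0,T]}\Bigl|\!\int_t^T\! Y_s^\varepsilon g(s,Y_s^\varepsilon,Z_s^\varepsilon)\,{\rm d}B_s\Bigr| \le c\,\mathbb{E}\Bigl(\!\int_0^T|Y_s^\varepsilon|^2|g(s,Y_s^\varepsilon,Z_s^\varepsilon)|^2\,{\rm d}s\Bigr)^{\!1/2}\le \tfrac{1}{4}\mathbb{E}\sup_{t}|Y_t^\varepsilon|^2 + c'\,\mathbb{E}\!\int_0^T|g|^2\,{\rm d}s,
\end{equation*}
and analogously for $\sum_i\int_\cdot^T Y_s^\varepsilon Z_s^{\varepsilon,(i)}\,{\rm d}H_s^{(i)}$ using the orthonormality of the Teugels martingales, producing a term controlled by $\tfrac{1}{4}\mathbb{E}\sup_t|Y_t^\varepsilon|^2 + c'\,\mathbb{E}\!\int_0^T\|Z_s^\varepsilon\|_{\ell^2}^2\,{\rm d}s$. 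Absorbing the $\tfrac{1}{4}$-terms on the left and invoking the bounds already proved on $\mathbb{E}|Y_s^\varepsilon|^2$ and $\mathbb{E}\!\int_0^T\|Z_s^\varepsilon\|_{\ell^2}^2\,{\rm d}s$ completes the estimate.

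The delicate point is the joint absorption: one needs $\eta$ and $\mu$ chosen so that the $\|Z^\varepsilon\|_{\ell^2}^2$ contributions coming from $|g|^2$ \emph{and} from the cross-term with $f$ together leave a strictly positive coefficient $\beta$ on the left. This is precisely where $\alpha<1$ in (H2) is indispensable; without it the standard absorption argument fails and the estimates cannot be closed.
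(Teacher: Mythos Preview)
Your proof is correct and follows essentially the same route as the paper: apply the It\^o formula to $|Y_t^\varepsilon|^2$, discard the Yosida term via $Y_s^\varepsilon D\varphi_\varepsilon(Y_s^\varepsilon)\ge 0$ (Proposition~\ref{pro2.1}(4)), use (H2) with the crucial $\alpha<1$ to absorb the $\|Z^\varepsilon\|_{\ell^2}^2$ contributions, then conclude by Gronwall and Burkholder--Davis--Gundy. The paper's version is terser (it records specific constants $M=\tfrac{1-\alpha}{2C}$, $\beta=\tfrac{3\alpha}{1-\alpha}$ and then simply invokes Gronwall and BDG without writing out the supremum step), whereas you spell out the BDG argument for the two martingale integrals explicitly; apart from this difference in presentation, the arguments coincide.
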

\begin{proof}
Applying the It\^{o} formula to $|Y_t^\varepsilon|^2$ yields that

\begin{eqnarray}
|Y_t^\varepsilon|^2+\frac{2}{\varepsilon}
\int_t^TY_s^\varepsilon D\varphi_\varepsilon(Y_s^\varepsilon)\,{\rm
d}s
&=&|\xi|^2+2\int_t^TY_{s^-}^\varepsilon
f(s,Y_s^\varepsilon,Z_s^\varepsilon)\,{\rm d}s
+2\int_t^TY_{s^-}^\varepsilon g(s,Y_s^\varepsilon,Z_s^\varepsilon)\,{\rm d}B_s \nonumber\\
&&+\int_t^T|g(s,Y_s^\varepsilon,Z_s^\varepsilon)|^2\,{\rm
d}s-\sum_{i=1}^{\infty}\int_t^T|Z_s^{\varepsilon,(i)}|^2\,{\rm d[H^{(i)},H^{(i)}]}_s
\nonumber\\
&&-2\sum_{i=1}^\infty \int_t^TY_{s^-}^\varepsilon
Z_s^{\varepsilon,(i)}\,{\rm d}H_s^{(i)}.\label{eq2}
\end{eqnarray}
Noting that the fact $Y_s^\varepsilon
D\varphi_\varepsilon(Y_s^\varepsilon)\geq 0$ and taking expectation
on the both sides, we obtain
\begin{eqnarray}
\mathbb{E}|Y_t^\varepsilon|^2+\mathbb{E}\int_t^T\|Z_s^\varepsilon\|_{\ell^2}^2\,{\rm
d}s &\leq& \mathbb{E} |\xi|^2+2\mathbb{E}\int_t^TY_{s^-}^\varepsilon
f(s,Y_s^\varepsilon,Z_s^\varepsilon)\,{\rm d}s\nonumber\\
&&+\mathbb{E}\int_t^T|g(s,Y_s^\varepsilon,Z_s^\varepsilon)|^2\,{\rm
d}s.\label{eq4}
 \end{eqnarray}
Using the elementary inequality $2ab\leq \beta^2
a^2+\frac{b^2}{\beta^2}$ for all $a,b\geq 0$ and (H2), we get
\begin{eqnarray*}
2yf(s,y,z)&=&2y(f(s,y,z)-f(s,0,0))+2yf(s,0,0)\\
&\leq& \frac{1}{M}|y|^2+MC|y|^2+MC\|z\|_{\ell^2}^2+|y|^2+|f(s,0,0)|^2\\
&\leq
&\left(1+\frac{1}{M}+MC\right)|y|^2+|f(s,0,0)|^2+MC\|z\|_{\ell^2}^2,
\end{eqnarray*}
 \begin{eqnarray*}
|g(s,y,z)|^2&=&|g(s,y,z)-g(s,0,0)+g(s,0,0)|^2\\
&\leq& \left(1+\frac{1}{\beta}\right)|g(s,y,z)-g(s,0,0)|^2+(1+\beta)|g(s,0,0)|^2\\
&\leq&\left(1+\frac{1}{\beta}\right)C|y|^2+(1+\beta)|g(s,0,0)|^2+\alpha\left(1+\frac{1}{\beta}\right)\|z\|_{\ell^2}^2.
\end{eqnarray*}
Choosing $M=\frac{1-\alpha}{2C},\beta=\frac{3\alpha}{1-\alpha}$, it
follows from (\ref{eq4}) that
\begin{eqnarray*}
&&\mathbb{E}|Y_t^\varepsilon|^2+\frac{1-\alpha}{6}\mathbb{E}\int_t^T\|Z_s^\varepsilon\|_{\ell^2}^2\,{\rm
d}s\\
&\leq& C\mathbb{E}\left( |\xi|^2+\int_t^T|Y_s^\varepsilon|^2\,{\rm
d}s+\int_0^T|f(s,0,0)|^2\,{\rm d}s+\int_0^T|g(s,0,0)|^2\,{\rm
d}s\right).
\end{eqnarray*}
Gronwall inequality and Bulkholder-Davis-Gundy inequality show the desired result.
\end{proof}
\begin{lemma}\label{lam3.2}
Assume the assumptions of {\rm(H1)--(H4)} hold. Then, there exists a
constant $C_2>0$ such that
\begin{description}
\item $(i)$ $\displaystyle\mathbb{E}\int_0^T\left(\frac{1}{\varepsilon}\left|D\varphi_\varepsilon(Y_s^\varepsilon)\right|\right)^2\,{\rm
d}s\leq C_2;$
\item $(ii)$ $\displaystyle\mathbb{E}\varphi\left(J_\varepsilon(Y_t^\varepsilon)\right)\leq C_2;$
\item $(iii)$ $\displaystyle\mathbb{E}|Y_t^\varepsilon-J_\varepsilon(Y_t^\varepsilon)|^2\leq \varepsilon^2 C_2.$
\end{description}
\end{lemma}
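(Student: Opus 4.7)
The plan is to apply an It\^o--Meyer formula to the renormalized Yosida approximation
$\tilde\varphi_\varepsilon(y):=\tfrac{1}{\varepsilon}\varphi_\varepsilon(y)$,
whose gradient at $Y^\varepsilon$ is exactly $U^\varepsilon=\tfrac{1}{\varepsilon}D\varphi_\varepsilon(Y^\varepsilon)$ and which is convex with a $\tfrac{1}{\varepsilon}$-Lipschitz derivative. Two features make this the right function to test against. First, $\tilde\varphi_\varepsilon\le\varphi$ pointwise (obtained by taking $y=x$ inside the infimum-convolution defining $\varphi_\varepsilon$), so $\mathbb{E}\tilde\varphi_\varepsilon(\xi)\le\mathbb{E}\varphi(\xi)<\infty$ by (H4). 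Second, the dissipative drift $U^\varepsilon\,{\rm d}s$ in (\ref{eq1}) paired with $D\tilde\varphi_\varepsilon(Y^\varepsilon)=U^\varepsilon$ produces a clean $|U^\varepsilon|^2$ integrand on the left-hand side.

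Concretely, I would apply the doubly stochastic It\^o--Meyer formula, in the same spirit as the $|Y^\varepsilon|^2$ calculation leading to (\ref{eq2}), to $\tilde\varphi_\varepsilon(Y_s^\varepsilon)$ on $[t,T]$, discard the nonnegative jump-correction terms by convexity, substitute ${\rm d}Y^\varepsilon_s$ read off from (\ref{eq1}), and control the second-order contribution from $g$ using that $D\varphi_\varepsilon$ is $1$-Lipschitz (hence $D^2\varphi_\varepsilon\le 1$ a.e.) together with the $L^2$-bound on $g(s,Y^\varepsilon_s,Z^\varepsilon_s)$ coming from (H2) and Lemma \ref{lam3.1}. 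After localizing the stochastic integrals so as to kill their expectation, Young's inequality $|U^\varepsilon f|\le\tfrac12|U^\varepsilon|^2+\tfrac12|f|^2$ absorbs half of $\mathbb{E}\int_t^T|U^\varepsilon|^2\,{\rm d}s$ on the left, and bounding $\mathbb{E}\int_0^T|f(s,Y^\varepsilon_s,Z^\varepsilon_s)|^2\,{\rm d}s$ through (H1)(ii), (H2) and Lemma \ref{lam3.1} yields the master estimate
\begin{eqnarray*}
\mathbb{E}\tilde\varphi_\varepsilon(Y_t^\varepsilon)+\tfrac12\mathbb{E}\int_t^T|U_s^\varepsilon|^2\,{\rm d}s\le C,\qquad t\in[0,T],\ \varepsilon>0.
\end{eqnarray*}

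All three claims then follow from this single estimate. Assertion (i) is obtained by setting $t=0$ and recalling $|U_s^\varepsilon|^2=(\tfrac{1}{\varepsilon}|D\varphi_\varepsilon(Y_s^\varepsilon)|)^2$. Assertion (ii) follows from the pointwise decomposition $\tilde\varphi_\varepsilon(x)=\tfrac{1}{2\varepsilon}|x-J_\varepsilon x|^2+\varphi(J_\varepsilon x)\ge\varphi(J_\varepsilon x)$, so the uniform bound on $\mathbb{E}\tilde\varphi_\varepsilon(Y_t^\varepsilon)$ passes to $\mathbb{E}\varphi(J_\varepsilon(Y_t^\varepsilon))$. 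For (iii), Proposition \ref{pro2.1}(2) supplies the pointwise identity $Y_t^\varepsilon-J_\varepsilon(Y_t^\varepsilon)=\varepsilon U_t^\varepsilon$, whence the bound on $|Y_t^\varepsilon-J_\varepsilon(Y_t^\varepsilon)|^2$ at the $\varepsilon^2$ scale is inherited from (i) (in integrated form over $[0,T]$).

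The principal technical obstacle is the rigorous It\^o--Meyer expansion in this doubly stochastic setting: the backward Kunita--It\^o calculus for $B$ contributes to the continuous quadratic variation with the opposite sign from the forward convention (compare the $+\int|g|^2{\rm d}s$ versus $-\sum\int|Z^{(i)}|^2{\rm d}[H^{(i)},H^{(i)}]$ in (\ref{eq2})), the orthonormal Teugels martingales $\langle H^{(i)},H^{(j)}\rangle_s=\delta_{ij}s$ must be handled jointly, and the jump corrections have to be organised so that convexity converts them into favourable signs. Because $D^2\varphi_\varepsilon$ stays bounded by $1$ independently of $\varepsilon$, the second-order correction from $g$ does not blow up as $\varepsilon\downarrow 0$, which is precisely what allows the master estimate to close.
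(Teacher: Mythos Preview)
Your strategy---test the approximating BDSDE against the Yosida functional so that the drift pairing produces $|U^\varepsilon|^2$---is the same as the paper's, but there is a genuine gap in how you dispose of the backward $B$-term. When you apply It\^o--Meyer to $\tilde\varphi_\varepsilon(Y^\varepsilon)$, the second-order correction from the backward integral appears (as you yourself note from~(\ref{eq2})) on the right-hand side as $+\tfrac12\int_t^T D^2\tilde\varphi_\varepsilon(Y^\varepsilon_s)\,|g(s,Y^\varepsilon_s,Z^\varepsilon_s)|^2\,{\rm d}s$. You invoke $D^2\varphi_\varepsilon\le 1$, but the function being expanded is $\tilde\varphi_\varepsilon=\tfrac1\varepsilon\varphi_\varepsilon$, whose second derivative is $\tfrac1\varepsilon D^2\varphi_\varepsilon$ and is only bounded by $\tfrac1\varepsilon$. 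Hence this contribution is of order $\tfrac{1}{2\varepsilon}\mathbb{E}\int_0^T|g|^2\,{\rm d}s$, which blows up as $\varepsilon\downarrow 0$, and the master estimate does not close uniformly in $\varepsilon$. Working with $\varphi_\varepsilon$ instead does not rescue the argument: the $|g|^2$ term is then $O(1)$ rather than $O(\varepsilon)$, so after dividing through to reach $\mathbb{E}\int|U^\varepsilon|^2$ one again gets $O(1/\varepsilon)$.

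The paper sidesteps this by \emph{not} using the full second-order It\^o formula. It takes a partition of $[0,T]$, applies the first-order convexity inequality $\varphi_\varepsilon(Y^\varepsilon_{t_{i+1}})\ge\varphi_\varepsilon(Y^\varepsilon_{t_i})+D\varphi_\varepsilon(Y^\varepsilon_{t_i})(Y^\varepsilon_{t_{i+1}}-Y^\varepsilon_{t_i})$, inserts the BDSDE increment, sums and passes to the limit. The $g\,{\rm d}B$ piece then enters only as a stochastic integral with zero expectation, and no $|g|^2$ term ever appears; this yields (\ref{eq7}). The subsequent Young inequality is also weighted differently from yours: $D\varphi_\varepsilon\cdot f\le\tfrac{1}{2\varepsilon}|D\varphi_\varepsilon|^2+\tfrac{\varepsilon}{2}|f|^2$, which together with $\varphi_\varepsilon(\xi)\le\varepsilon\varphi(\xi)$ gives the correct $\varepsilon$-scaling. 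Finally, note that (iii) is a pointwise-in-$t$ statement, so it does not follow from the integrated bound (i) via $Y^\varepsilon_t-J_\varepsilon(Y^\varepsilon_t)=\varepsilon U^\varepsilon_t$; the paper obtains it from the identity $|x-J_\varepsilon x|^2=2\varphi_\varepsilon(x)-2\varepsilon\varphi(J_\varepsilon x)$ combined with the pointwise control of $\mathbb{E}\varphi_\varepsilon(Y^\varepsilon_t)$.
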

\begin{proof}
(i) Given an equidistant partition of interval $[0,T]$ such that
$0=t_0<t_1<t_2<\cdots<t_n=T$ and $t_{i+1}-t_i=\frac{1}{n}$, the
subdifferential inequality shows
$$\varphi_\varepsilon(Y_{t_{i+1}}^\varepsilon)\geq
\varphi_\varepsilon(Y_{t_{i}}^\varepsilon)+(Y_{t_{i+1}}^\varepsilon-Y_{t_{i}}^\varepsilon)
D\varphi_\varepsilon(Y_{t_{i}}^\varepsilon).$$ From (\ref{eq1}), we
obtain

\begin{eqnarray*}
\varphi_\varepsilon(Y_{t_i}^\varepsilon)+\frac{1}{\varepsilon}
\int_{t_i}^{t_{i+1}}D\varphi_\varepsilon(Y_{t_i}^\varepsilon)
D\varphi_\varepsilon(Y_s^\varepsilon)\,{\rm d}s
&\leq&\varphi_\varepsilon(Y_{t_{i+1}}^\varepsilon)
+\int_{t_i}^{t_{i+1}}D\varphi_\varepsilon(Y_{t_i}^\varepsilon)f(s,Y_s^\varepsilon,Z_s^\varepsilon)\,{\rm
d}s
\nonumber\\
&&+\int_{t_i}^{t_{i+1}}D\varphi_\varepsilon(Y_{t_i}^\varepsilon) g(s,Y_s^\varepsilon,Z_s^\varepsilon)\,{\rm d}B_s \nonumber\\
&&-2\sum_{j=1}^\infty
\int_{t_i}^{t_{i+1}}D\varphi_\varepsilon(Y_{t_i}^\varepsilon)
(Z_s^{\varepsilon})^{(j)}\,{\rm d}H_s^{(j)}.
 \end{eqnarray*}
Summing up the above formula over $i$ and letting $n\rightarrow
\infty,$ we obtain

\begin{eqnarray*}
&&\varphi_\varepsilon(Y_{t^-}^\varepsilon)+\frac{1}{\varepsilon}
\int_0^T| D\varphi_\varepsilon(Y_{s^-}^\varepsilon)|^2\,{\rm d}s\nonumber\\
&\leq&\varphi_\varepsilon(\xi)
+\int_0^TD\varphi_\varepsilon(Y_{s^-}^\varepsilon)f(s,Y_s^\varepsilon,Z_s^\varepsilon)\,{\rm
d}s +\int_0^TD\varphi_\varepsilon(Y_{s^-}^\varepsilon) g(s,Y_s^\varepsilon,Z_s^\varepsilon)\,{\rm d}B_s \nonumber\\
&&-2\sum_{j=1}^\infty \int_0^TD\varphi_\varepsilon(Y_{s^-}^\varepsilon)
(Z_s^{\varepsilon})^{(j)}\,{\rm d}H_s^{(j)}.\label{eq6}
 \end{eqnarray*}
Taking expectation on the both sides, we get
\begin{eqnarray}
&&\mathbb{E}\varphi_\varepsilon(Y_t^\varepsilon)+\frac{1}{\varepsilon}\mathbb{E}
\int_0^T| D\varphi_\varepsilon(Y_s^\varepsilon)|^2\,{\rm d}s
\leq\mathbb{E}\varphi_\varepsilon(\xi)
+\mathbb{E}\int_0^TD\varphi_\varepsilon(Y_s^\varepsilon)f(s,Y_s^\varepsilon,Z_s^\varepsilon)\,{\rm
d}s.\label{eq7}
 \end{eqnarray}
For
\begin{eqnarray*}\label{eq8}D\varphi_\varepsilon(y) f(s,y,z) &\leq&\frac{1}{2\varepsilon}
\left|D\varphi_\varepsilon(y)\right|^2+\frac{\varepsilon}{2}|f(s,y,z)|^2 \nonumber\\
&\leq&\frac{1}{2\varepsilon}
|D\varphi_\varepsilon(y)|^2+\varepsilon(|f(s,y,z)-f(s,0,0)|^2+|f(s,0,0)|^2)\nonumber\\
&\leq&\frac{1}{2\varepsilon} |D\varphi_\varepsilon(y)|^2+\varepsilon
C|y|^2 +\varepsilon C\|z\|_{\ell^2}^2+\varepsilon |f(s,0,0)|^2,
 \end{eqnarray*}
the fact that $\varphi_\varepsilon(Y_t^\varepsilon)\geq 0$ and
$\varphi_\varepsilon(\xi)\leq \varepsilon \varphi(\xi),$ we obtain
\begin{eqnarray*}&&\frac{1}{2\varepsilon}\mathbb{E} \int_t^T|
D\varphi_\varepsilon(Y_s^\varepsilon)|^2\,{\rm d}s\leq
C\mathbb{E}\left(\varphi(\xi)+\int_0^T|f(s,0,0)|^2\,{\rm
d}s+T\sup_{0\leq t\leq
T}|Y_t^\varepsilon|^2+\int_0^T\|Z_t^\varepsilon\|_{\ell^2}^2\,{\rm
d}t\right).\end{eqnarray*} Lemma \ref{lam3.1} shows the desired
result.
\\
(ii) From (\ref{eq7}), we obtain
$$\mathbb{E}\varphi_\varepsilon(Y_t^\varepsilon)\leq \varepsilon
\mathbb{E}\varphi(\xi)
+\frac{1}{2\varepsilon}\mathbb{E}\int_t^T|D\varphi_\varepsilon(Y_s^\varepsilon)|^2\,{\rm
d}s+\varepsilon
\mathbb{E}\int_t^T|f(s,Y_s^\varepsilon,Z_s^\varepsilon)|^2\,{\rm
d}s.$$
 Using
$\varphi(J_\varepsilon(Y_t^\varepsilon))\leq \frac{1}{\varepsilon}
\varphi_\varepsilon(Y_t^\varepsilon)$ and $(i)$, we obtain $(ii)$.
\\
The last part of the Lemma simply follows from the fact that
\begin{eqnarray*}
|x-J_{\varepsilon}(x)|=2\varphi_{\varepsilon}(x)-2\varepsilon\varphi(J_{\varepsilon}(x)).
\end{eqnarray*}
\end{proof}

In what follows, we aim to show that $(Y^\varepsilon,Z^\varepsilon)$
is a Cauchy sequence in  $S^2\times \mathcal{P}^2(l^2)$.
\begin{lemma}\label{lam3.3}
Assume the assumptions of {\rm(H1)--(H4)} hold. Then, there exists a
constant $C_3$ such that for all $\varepsilon, \delta>0$
$$\mathbb{E}\left(\sup_{0\leq t\leq
T}|Y_t^\varepsilon-Y_t^\delta|^2+\int_0^T\|Z_t^\varepsilon-Z_t^\delta\|_{\ell^2}^2\,{\rm
d}t\right)\leq C_3(\varepsilon+\delta).$$
\end{lemma}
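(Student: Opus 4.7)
The plan is to apply It\^o's formula to $|Y_t^\varepsilon - Y_t^\delta|^2$ and exploit the monotonicity of the Yosida approximations (Proposition \ref{pro2.1}(5)) together with the a priori estimates of Lemmas \ref{lam3.1} and \ref{lam3.2}. First I would subtract the two equations of type (\ref{eq1}) for $\varepsilon$ and $\delta$, apply It\^o's formula, and take expectation; the backward Brownian stochastic integral and the forward Teugels martingale stochastic integrals disappear, and the orthonormality $\langle H^{(i)},H^{(j)}\rangle_t=\delta_{ij}t$ turns the quadratic variation contribution into $\mathbb{E}\int_t^T\|Z_s^\varepsilon-Z_s^\delta\|_{\ell^2}^2\,{\rm d}s$, leaving the identity
\begin{eqnarray*}
&&\mathbb{E}|Y_t^\varepsilon-Y_t^\delta|^2+\mathbb{E}\int_t^T\|Z_s^\varepsilon-Z_s^\delta\|_{\ell^2}^2\,{\rm d}s\\
&=&-2\mathbb{E}\int_t^T(Y_s^\varepsilon-Y_s^\delta)\left[\tfrac{1}{\varepsilon}D\varphi_\varepsilon(Y_s^\varepsilon)-\tfrac{1}{\delta}D\varphi_\delta(Y_s^\delta)\right]{\rm d}s\\
&&+2\mathbb{E}\int_t^T(Y_s^\varepsilon-Y_s^\delta)[f(s,Y_s^\varepsilon,Z_s^\varepsilon)-f(s,Y_s^\delta,Z_s^\delta)]\,{\rm d}s\\
&&+\mathbb{E}\int_t^T|g(s,Y_s^\varepsilon,Z_s^\varepsilon)-g(s,Y_s^\delta,Z_s^\delta)|^2\,{\rm d}s.
\end{eqnarray*}

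For the subdifferential cross term, Proposition \ref{pro2.1}(5) furnishes the upper bound $2(\tfrac{1}{\varepsilon}+\tfrac{1}{\delta})|D\varphi_\varepsilon(Y^\varepsilon)||D\varphi_\delta(Y^\delta)|$, and Cauchy-Schwarz combined with the rewriting $\mathbb{E}\int_0^T|D\varphi_\varepsilon(Y^\varepsilon)|^2\,{\rm d}s\leq \varepsilon^2 C_2$ of Lemma \ref{lam3.2}(i) controls this contribution by $2(\tfrac{1}{\varepsilon}+\tfrac{1}{\delta})\varepsilon\delta C_2 = 2C_2(\varepsilon+\delta)$. For the $f$-term I would apply (H2) together with $2ab\leq \eta^{-1}a^2+\eta b^2$, and for the $g$-term the Lipschitz bound directly; this yields a coefficient of the form $\eta C+\alpha$ on $\|Z^\varepsilon-Z^\delta\|_{\ell^2}^2$. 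Since $\alpha<1$, one can pick $\eta$ small enough that this coefficient stays strictly below $1$, so the $Z$-term is absorbed into the left hand side. Gronwall's inequality applied to $\mathbb{E}|Y_t^\varepsilon-Y_t^\delta|^2$ then delivers $\mathbb{E}|Y_t^\varepsilon-Y_t^\delta|^2+\mathbb{E}\int_0^T\|Z^\varepsilon-Z^\delta\|_{\ell^2}^2\,{\rm d}s\leq C(\varepsilon+\delta)$.

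To upgrade the pointwise bound on $Y$ to a uniform $\sup_{0\leq t\leq T}$ bound, I would return to the It\^o identity \emph{before} taking expectation and apply the Burkholder-Davis-Gundy inequality to the two local martingale terms $\int_t^T (Y^\varepsilon-Y^\delta)(g(\cdot,Y^\varepsilon,Z^\varepsilon)-g(\cdot,Y^\delta,Z^\delta))\,{\rm d}B$ and $\sum_i\int_t^T(Y^\varepsilon_{s^-}-Y^\delta_{s^-})(Z^{\varepsilon,(i)}-Z^{\delta,(i)})\,{\rm d}H^{(i)}$, then reinvest the estimates already obtained. The main obstacle is precisely the subdifferential cross term: the factors $1/\varepsilon+1/\delta$ threaten to blow up, and only Lemma \ref{lam3.2}(i), by giving $L^2$-control of size $\varepsilon$ on $D\varphi_\varepsilon(Y^\varepsilon)$, lets their combined contribution vanish as $\varepsilon,\delta\to 0$.
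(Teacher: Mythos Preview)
Your proposal is correct and follows essentially the same route as the paper: It\^o's formula for $|Y^\varepsilon-Y^\delta|^2$, Proposition \ref{pro2.1}(5) together with Lemma \ref{lam3.2}(i) (via Cauchy--Schwarz) to turn the subdifferential cross term into $C(\varepsilon+\delta)$, the Lipschitz bounds (H2) with $\alpha<1$ to absorb the $Z$-contribution, Gronwall, and finally Burkholder--Davis--Gundy for the supremum. The only cosmetic difference is that the paper makes the explicit choice $\eta=\frac{1-\alpha}{2C}$ where you leave $\eta$ implicit.
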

\begin{proof}
Applying the It\^{o} formula to $|Y_t^\varepsilon-Y_t^\delta|^2$
yields that
\begin{eqnarray}|Y_t^\varepsilon-Y_t^\delta|^2&=&-2
\int_t^T(Y_s^\varepsilon-Y_s^\delta)
\left(\frac{1}{\varepsilon}D\varphi_\varepsilon(Y_s^\varepsilon)\,{\rm
d}s
-\frac{1}{\delta}D\varphi_\delta(Y_s^\delta)\right)\,{\rm d}s\nonumber\\
&&+2\int_t^T(Y_s^\varepsilon-Y_s^\delta)(f(s,Y_s^\varepsilon,Z_s^\varepsilon)-f(s,Y_s^\delta,Z_s^\delta))\,{\rm d}s\nonumber\\
&&+2\int_t^T(Y_s^\varepsilon-Y_s^\delta)(g(s,Y_s^\varepsilon,Z_s^\varepsilon)-g(s,Y_s^\delta,Z_s^\delta))\,{\rm d}B_s\nonumber\\
&&+\int_t^T|g(s,Y_s^\varepsilon,Z_s^\varepsilon)-g(s,Y_s^\delta,Z_s^\delta)|^2\,{\rm
d}s-
\int_t^T\|Z_s^\varepsilon-Z_s^\delta\|_{\ell^2}^2\,{\rm d}s\nonumber\\
&&-2\sum_{i=1}^\infty \int_t^T(Y_s^\varepsilon-Y_s^\delta)
(Z_s^{\varepsilon,(i)}-Z_s^{\delta,(i)})\,{\rm d}H_s^{(i)}.
\end{eqnarray} Taking expectation, we obtain

$\displaystyle\mathbb{E}|Y_t^\varepsilon-Y_t^\delta|^2+
\mathbb{E}\int_t^T\|Z_s^\varepsilon-Z_s^\delta\|_{\ell^2}^2\,{\rm
d}s$
\begin{eqnarray}&=&-2
\mathbb{E}\int_t^T(Y_s^\varepsilon-Y_s^\delta)
\left(\frac{1}{\varepsilon}D\varphi_\varepsilon(Y_s^\varepsilon)\,{\rm d}s-\frac{1}{\delta}D\varphi_\delta(Y_s^\delta)\right)\,{\rm d}s\nonumber\\
&&+2\mathbb{E}\int_t^T(Y_s^\varepsilon-Y_s^\delta)
(f(s,Y_s^\varepsilon,Z_s^\varepsilon)-f(s,Y_s^\delta,Z_s^\delta))\,{\rm d}s\nonumber\\
&&+\mathbb{E}\int_t^T|g(s,Y_s^\varepsilon,Z_s^\varepsilon)-g(s,Y_s^\delta,Z_s^\delta)|^2\,{\rm
d}s.
 \end{eqnarray}
Using the elementary inequality $2ab\leq \beta^2
a^2+\frac{b^2}{\beta^2}$ for all $a,b\geq 0$ and (H2), we get

$(Y_s^\varepsilon-Y_s^\delta)(f(s,Y_s^\varepsilon,Z_s^\varepsilon)-f(s,Y_s^\delta,Z_s^\delta))$
$$\leq
\frac{2C}{1-\alpha}|Y_s^\varepsilon-Y_s^\delta|^2+\frac{1-\alpha}{2}|Y_s^\varepsilon-Y_s^\delta|^2+
\frac{1-\alpha}{2}\|Z_s^\varepsilon-Z_s^\delta\|_{\ell^2}^2$$ and
$$|g(s,Y_s^\varepsilon,Z_s^\varepsilon)-g(s,Y_s^\delta,Z_s^\delta)|^2\leq
C|Y_s^\varepsilon-Y_s^\delta|^2+
\alpha\|Z_s^\varepsilon-Z_s^\delta\|_{\ell^2}^2. $$
 Noting (5) of Proposition\ref{pro2.1}, we obtain

$\displaystyle\mathbb{E}|Y_t^\varepsilon-Y_t^\delta|^2+
\frac{1-\alpha}{2}\mathbb{E}\int_t^T\|Z_s^\varepsilon-Z_s^\delta\|_{\ell^2}^2\,{\rm
d}s$
\begin{eqnarray}&\leq &C\varepsilon
\mathbb{E}\int_t^T|Y_s^\varepsilon-Y_s^\delta|^2\,{\rm
d}s\nonumber\\&&+2\left(\frac{1}{\varepsilon}+\frac{1}{\delta}\right)\mathbb{E}\int_t^T
|D\varphi_\varepsilon(Y_s^\varepsilon)||D\varphi_\delta(Y_s^\delta))|\,{\rm
d}s.
 \end{eqnarray}
Lemma \ref{lam3.2} shows that
$$2\left(\frac{1}{\varepsilon}+\frac{1}{\delta}\right)\mathbb{E}\int_t^T
|D\varphi_\varepsilon(Y_s^\varepsilon)||D\varphi_\delta(Y_s^\delta))|\,{\rm
d}s\leq (\varepsilon+\delta)C.$$ So, we can obtain
$$\mathbb{E}|Y_t^\varepsilon-Y_t^\delta|^2+
\mathbb{E}\int_t^T\|Z_s^\varepsilon-Z_s^\delta\|_{\ell^2}^2\,{\rm
d}s\leq C\mathbb{E}\int_t^T|Y_s^\varepsilon-Y_s^\delta|^2\,{\rm
d}s+C(\varepsilon+\delta).$$
 The Gronwall inequality shows that
$$\sup_{0\leq t \leq T}\mathbb{E}|Y_t^\varepsilon-Y_t^\delta|^2+
\mathbb{E}\int_t^T\|Z_s^\varepsilon-Z_s^\delta\|_{\ell^2}^2\,{\rm
d}s\leq C(\varepsilon+\delta).$$
 The Bulkholder-Davis-Gundy inequality shows the desired
result.
\end{proof}
\subsubsection*{Proof of Theorem \ref{thm3.1}}
{\it Existence.} Lemma \ref{lam3.3} shows that
$(Y^\varepsilon,Z^\varepsilon)$ is a Cauchy sequence in $S^2\times
\mathcal{P}^2(l^2)$. Denoting its limit by $(Y,Z)$, then it follows from Lemma \ref{lam3.2} $(Y,Z)
\in S^2\times \mathcal{P}^2(l^2)$.
For each $\varepsilon\geq 0,$ define $U_t^\varepsilon
=\frac{1}{\varepsilon}D\varphi_\varepsilon(Y_t^\varepsilon)$ and
$\bar{U}_t^\varepsilon=\int_0^t U_s^\varepsilon \,{\rm d}s$. Therefore
(\ref{eq1}) and Lemma \ref{lam3.3} yield that for all $\varepsilon,\, \delta>0$
\begin{eqnarray*}
\mathbb{E}\left(\sup_{0\leq t \leq
T}|\bar{U}_t^\varepsilon-\bar{U}_t^\delta|^2\right)\leq
C\mathbb{E}\left(\sup_{0\leq t \leq
T}|Y_t^\varepsilon-Y_t^\delta|^2+\int_0^T\|Z_t^\varepsilon-Z_t^\delta\|_{\ell^2}^2\,{\rm
d}t\right),
\end{eqnarray*}
which shows that $(\bar{U}^\varepsilon)$ is a Cauchy sequence.
Hence, there exists a measurable process $\bar{U_t}$ such that
\begin{eqnarray*}
\lim_{\varepsilon\rightarrow 0}\mathbb{E}\left(\sup_{0\leq t \leq
T}|\bar{U}_t^\varepsilon-\bar{U_t}|^2\right)=0.
\end{eqnarray*}
 Furthermore, Lemma \ref{lam3.2} $(i)$ shows that
\begin{eqnarray*}
\sup_{\varepsilon}\mathbb{E}\int_0^T|U_t^\varepsilon|^2\,{\rm d}t=
\sup_{\varepsilon}\mathbb{E}\int_0^T\left(\frac{1}{\varepsilon}|D\varphi_\varepsilon(Y_t^\varepsilon)|\right)^2\,{\rm
d}t<\infty,
\end{eqnarray*}
 which shows that $\bar{U}_t^\varepsilon$ is bounded in the space $L^2(\Omega,H^1[0,T])$, and
$(\bar{U}^\varepsilon)_{\varepsilon}$ converges weakly to a
limit in that space and the limit is necessarily $\bar{U}.$ In
particular, $\bar{U}$ is absolutely continuous. So, there
exists a measurable process $(U_t)_{0\leq t \leq T }\in
\mathcal{H}^2$ such that $\bar{U}_t=\int_0^t U_s\,{\rm d}s.$

Next, we show that $(Y_t,U_t)\in \partial \varphi, \,{\rm
d}\mathbb{P}\otimes \,{\rm d}t\mbox{-a.e. on}\ [0,T].$ Moreover, with the help of Lemma 5.8 in \cite{Geg}, and for all $0\leq a<b\leq T, V\in
\mathcal{H}^2([a,b])$, we obtain
\begin{eqnarray*}
\int_a^b U_t^\varepsilon(V_t-Y_t^\varepsilon)\,{\rm d}t\rightarrow
\int_a^b U_t(V_t-Y_t)\,{\rm d}t,\ \mbox{as}\ \varepsilon\rightarrow
0
\end{eqnarray*}
in probability. In particular we have
\begin{eqnarray*}
 \int_a^b
U_t^\varepsilon(J_\varepsilon(Y_t^\varepsilon)-Y_t^\varepsilon)\,{\rm
d}t\rightarrow 0,\ \mbox{as}\ \varepsilon\rightarrow 0.
\end{eqnarray*}
which together with Proposition \ref{pro2.1} provides that $U_t^\varepsilon\in \partial
\varphi(J_\varepsilon(Y_t^\varepsilon))$ and
\begin{eqnarray*}
\int_a^bU_t^\varepsilon(V_t-J_\varepsilon(Y_t^\varepsilon)\,{\rm
d}t+\int_a^b\varphi(J_\varepsilon(Y_t^\varepsilon))\,{\rm d}t\leq
\int_a^b\varphi(V_t)\,{\rm d}t.
\end{eqnarray*}
Taking the $\liminf$ in probability in the above inequality, we
obtain
\begin{eqnarray*}
\int_a^bU_t(V_t-Y_t)\,{\rm d}t+\int_a^b\varphi(Y_t)\,{\rm d}t\leq
\int_a^b\varphi(V_t)\,{\rm d}t.
\end{eqnarray*}
Since $a, b$ and the process $V$ are arbitrary, this shows that
\begin{eqnarray*}
U_t(V_t-Y_t)+\varphi(Y_t)\leq \varphi(V_t), \,{\rm d}\mathbb{P}\otimes {\rm d}t\mbox{-a.e. on}\
[0,T].
\end{eqnarray*}
Taking limit on the both sides of (\ref{eq1}), we obtain
the existence of the solution.

\textsl{Uniqueness.} Let $(Y_t,U_t,Z_t)_{0\leq t \leq T}$ and
$(Y_t^\prime,U_t^\prime,Z_t^\prime)_{0\leq t \leq T}$ be two
solutions of BDSDEs associated with $(\xi,f,g,\varphi)$. Define
$$(\Delta Y_t, \Delta U_t,\Delta Z_t)_{0\leq t \leq
T}=(Y_t-Y_t^\prime,U_t-U_t^\prime,Z_t-Z_t^\prime)_{0\leq t \leq
T}.$$ Applying the It\^{o} formula to $|\Delta Y_t|^2$ shows that

 $\displaystyle\mathbb{E}|\Delta Y_t|^2+2\mathbb{E}\int_t^T\Delta U_s \Delta Y_s\,{\rm d}s
 + \mathbb{E}\int_t^T\|\Delta Z_t\|_{\ell^2}^2\,{\rm d}s$
\begin{eqnarray}
&=&2\mathbb{E}\int_t^T\Delta Y_s
[f(s,Y_s,Z_s)-f(s,Y_s^\prime,Z_s^\prime)]\,{\rm d}s\nonumber\\
&&+\mathbb{E}\int_t^T
|g(s,Y_s,Z_s)-g(s,Y_s^\prime,Z_s^\prime)|^2\,{\rm d}s.
 \end{eqnarray}
Since $\partial \varphi$ is monotone, we obtain $$\Delta U_s \Delta
Y_s\geq 0,\ \,{\rm d}\mathbb{P}\otimes {\rm d}t\mbox{-a.e.}$$
 Further, as the same procedure as Lemma \ref{lam3.3}, we obtain
 $$\mathbb{E}|\Delta Y_t|^2+\mathbb{E}\int_t^T\|\Delta Z_t\|_{\ell^2}^2\,{\rm d}s
 \leq C\mathbb{E}\int_t^T|\Delta Y_s|^2\,{\rm d}s
 +\frac{1}{2}\mathbb{E}\int_t^T\|\Delta Z_s\|_{\ell^2}^2\,{\rm d}s.$$
The Gronwall inequality shows the uniqueness of the solution.

\section {Stochastic viscosity solutions of multivalued SPDIEs}

In this section, we derive the existence of the stochastic viscosity
solution of a class of multivalued SPDIE (\ref{i11}) via BDSDE with
subdifferential operator and driven by Lévy process studied in the previous section.

\subsection {Notion of stochastic viscosity solution of multivalued  SPDIEs}

Let us recall ${\bf F}^{B}=\{\mathcal{F}_{t,T}^{B}\}_{0\leq t\leq
T}$ be the filtration generated by $B$. The objet
${\mathcal{M}}^{B}_{0,T}$ denotes all the ${\bf F}^{B}$-stopping
times $\tau$ such $0\leq \tau\leq T$, a.s. and
${\mathcal{M}}^{B}_{\infty}$ is the set of all almost surely finite
${\bf F}^{B}$-stopping times. For generic Euclidean spaces $E$ and
$E_{1}$, we state those spaces:
\begin{enumerate}
\item The symbol $\mathcal{C}^{k,n}([0,T]\times
E; E_{1})$ stands for the space of all $E_{1}$-valued functions
defined on $[0,T]\times E$ which are $k$-times continuously
differentiable in $t$ and $n$-times continuously differentiable in
$x$, and $\mathcal{C}^{k,n}_{b}([0,T]\times E; E_{1})$ denotes the
subspace of $\mathcal{C}^{k,n}([0,T]\times E; E_{1})$ in which all
functions have uniformly bounded partial derivatives.
\item For any sub-$\sigma$-field $\mathcal{G} \subseteq
\mathcal{F}_{T}^{B}$, $\mathcal{C}^{k,n}(\mathcal{G},[0,T]\times E;
E_{1})$ (resp.\, $\mathcal{C}^{k,n}_{b}(\mathcal{G},[0,T]\times E;
E_{1})$) denotes the space of all $\mathcal{C}^{k,n}([0,T]\times E;
E_{1})$  (resp.\, $\mathcal{C}^{k,n}_{b}([0,T]\times
E;E_{1})$-valued random variable that are
$\mathcal{G}\otimes\mathcal{B}([0,T]\times E)$-measurable;
\item $\mathcal{C}^{k,n}({\bf F}^{B},[0,T]\times E; E_{1})$
(resp.$\mathcal{C}^{k,n}_{b}({\bf F}^{B},[0,T]\times E; E_{1})$) is
the space of all random fields $\varphi\in
\mathcal{C}^{k,n}({\mathcal{F}}_{T},[0,T]\times E; E_{1}$ (resp.
$\mathcal{C}^{k,n}({\mathcal{F}}_{T},[0,T]\times E; E_{1})$, such
that for fixed $x\in E$ and $t\in [0,T]$, the mapping
$\displaystyle{\omega\rightarrow \alpha(t,\omega,x)}$ is ${\bf
F}^{B}$-progressively measurable.
\item For any sub-$\sigma$-field $\mathcal{G} \subseteq
\mathcal{F}^{B}$ and a real number $ p\geq 0$,
$L^{p}(\mathcal{G};E)$ denotes the set of all $E$-valued,
$\mathcal{G}$-measurable random variable $\xi$ such that $
\E|\xi|^{p}<\infty$.
\end{enumerate}
Furthermore, regardless of the dimension, we denote by
$\left<\cdot,\cdot\right>$ and $|\cdot|$ the inner product and norm
in $E$ and $E_1$, respectively. For
$(t,x,y)\in[0,T]\times\R^{d}\times\R$, we denote
$D_{x}=(\frac{\partial}{\partial
x_{1}},....,\frac{\partial}{\partial x_{d}}),\,
D_{xx}=(\partial^{2}_{x_{i}x_{j}})_{i,j=1}^{d}$,
$D_{y}=\frac{\partial}{\partial y}, \,\
D_{t}=\frac{\partial}{\partial t}$. The meaning of $D_{xy}$ and
$D_{yy}$ is then self-explanatory.\newline The coefficients
\begin{eqnarray*}
f&:&\Omega\times[0,T]\times\R^d\times\R\times\ell^{2}\to\R\\
g&:&\Omega\times[0,T]\times\R^d\times\R\to\R\\
\sigma&:&\R^{d}\to\R^{d}\\
u_0&:&\R^d\to \R,
\end{eqnarray*}
satisfying assumptions:
\begin{description}
\item $
(\rm{ H5})\ \left\{
\begin{array}{l}
|f(t,x,y,z)|\leq K(1+|x|+|y|+\|z\|),\\\\
|u_0(x)|+|\varphi(u_0(x))|\leq K(1+|x|).
\end{array}\right.
$
\item
$ (\rm{ H6})\ \left\{
\begin{array}{l}
\|\sigma(x)-\sigma(x')\|\leq K|x-x'|,\\\\
|f(t,x,y,z)-f(t,x,y',z')|\leq K(|y-y'|+\|z-z'\|_{\ell^2}).
\end{array}\right.
$
\item $(\rm{H7})$ The function
$g\in{\mathcal{C}}_{b}^{0,2,3}([0,T]\times\R^d\times\R;\R)$.
\end{description}
The definition of stochastic viscosity solution to MSPDIE (\ref{i1})
use the stochastic sub-and super-jets introduced by Buckdahn and Ma
\cite{BMa}. Let us recall the following needed definitions.
\begin{definition}
Let $\tau\in {\mathcal{M}}^{B}_{0,T}$, and
$\xi\in\mathcal{F}_{\tau}$. We say that a sequence of random
variables $(\tau_k,\xi_k)$ is a $(\tau,\xi)$-approximating sequence
if for all $k$, $(\tau_k,\xi_k)\in{\mathcal{M}}^{B}_{\infty}\times
L^{2}(\mathcal{F}_{\tau},\R^d)$ such that
\begin{itemize}
\item [(i)] $\xi_k\rightarrow\xi$  in probability;
\item [(ii)] either $\tau_k\uparrow\tau$ a.s., and $\tau_k<\tau$ on the set $\{\tau>0\}$; or $\tau_k\downarrow\tau$ a.s., and $\tau_k>\tau$ on the set $\{\tau<T\}$.
\end{itemize}
\end{definition}
\begin{definition}
Let $(\tau,\xi)\in \mathcal{M}_{0,T}^B\times
L^2\left(\mathcal{F}^{B}_{\tau}; \R^d\right)$ and $u\in
\mathcal{C}\left(\mathbf{F}^B, [0,T]\times \R^d\right)$. We denote
by $\mathcal{J}^{1,2,+}_{g} u(\tau,\xi)$ the stochastic $g$-superjet
of $u$ at $(\tau,\xi)$ the set of
$\R\times\R^d\times\mathcal{S}(n)$-valued and
$\mathcal{F}_{\tau}^B$-measurable random vector $(a, p,X)$
($\mathcal{S}(d)$ is the set of all symmetric $d\times d$ matrix)
which is such that for all $(\tau, \xi)$-approximating sequence
$(\tau_k,\xi_k)$, we have
\begin{eqnarray}
u(\tau_k,\xi_k)&\leq& u(\tau,\xi)+a(\tau_k-\tau)+b(B_{\tau_k}-B_{\tau})+\frac{c}{2}(B_{\tau_k}-B_{\tau})^2+\langle p,\xi_k-\xi\rangle\nonumber\\
&&+\langle q,\xi_k-\xi\rangle(B_{\tau_k}-B_{\tau})+\frac{1}{2}\langle X(\xi_k-\xi),\xi_k-\xi\rangle\nonumber\\
&&+o(|\tau_k-\tau|)+o(|\xi_k-\xi|^2).\label{jet}
\end{eqnarray}
The $\mathcal{F}_{\tau}^B$-measurable random vector $(b,c,q)$ taking
values in $\R\times\R^d\times\R^d$ is defined by
\begin{eqnarray*}
\left\{
\begin{array}{l}
b=g(\tau,\xi,u(\tau,\xi)),\;\;\; c=(g\partial_ug)(\tau,\xi,u(\tau,\xi))\\\\
q=\partial_xg(\tau,\xi,u(\tau,\xi))+\partial_ug(\tau,\xi,u(\tau,\xi))p.
\end{array}\right.
\end{eqnarray*}
Similarly, $\mathcal{J}^{1,2,-}_{g} u(\tau,\xi)$ denotes the set of
all stochastic $g$-subjet of $u$ at $(\tau,\xi)$ if the inequality
in (\ref{jet}) is reversed.
\end{definition}
\begin{remark}
Let us note that $\partial\varphi(y)=
[\varphi'_l(y),\varphi'_r(y)]$, for every $y\in {\rm Dom}(\varphi)$,
where $\varphi'_l(y)$ and $\varphi'_r(y)$ denote the left and right
derivatives of $\varphi$.
\end{remark}
In order to simplify notation in the definition of the notion
of stochastic viscosity solution of multivalued SPDIEs, we set
\begin{eqnarray*}
V_{f}(\tau,\xi,a,p,X)&=&-a-\frac{1}{2}{\rm
Trace}(\sigma\sigma^*(\xi)X)-m_1\langle
p,\sigma(\xi)\rangle-\frac{1}{2}\int_{\R}\langle X\sigma(\xi),\sigma(\xi)\rangle y^2 \nu({\rm d}y)\\
&&-f\left(\tau,\xi,u(\tau,\xi),\int_{\R}\langle
p,\sigma(\xi)y\rangle p_k(y)\nu({\rm d}y)\right).
\end{eqnarray*}
\begin{definition}\label{defvisco}
(1) A random field $u \in \mathcal{C}\left(\mathbf{F}^B, [0,T]\times
\R^d\right)$   which satisfies $u\left(T,x\right)=u_0\left(x\right)$, for all $x\in \R^d$, is
called a stochastic viscosity subsolution of MSPDIE (\ref{i1}) if
\begin{eqnarray*}
u(\tau,\xi)&\in & {\rm Dom}(\varphi),\;\;\;\;\;\; \forall\;
(\tau,\xi)\in\mathcal{M}_{0,T}^B\times
L^2\left(\mathcal{F}^{B}_{\tau};\R^d\right),\;\;\; \P\mbox{-a.s.},
\end{eqnarray*}
and at any $(\tau,\xi)\in\mathcal{M}_{0,T}^B\times
L^2\left(\mathcal{F}^{B}_{\tau};\R^d\right)$, for any $(a,
p,X)\in\mathcal{J}^{1,2,+}_{g} u(\tau,\xi)$, it hold $\P$-a.s.
\begin{eqnarray}\label{E:def1}
V_{f}(\tau,\xi,a,p,X)+\varphi'_l(u(\tau,\xi)-\frac{1}{2}(g\partial_ug)(\tau,\xi,u(\tau,\xi))\leq
0;
\end{eqnarray}
(2) A random field $u \in \mathcal{C}\left(\mathbf{F}^B,
[0,T]\times\R^d\right)$   which satisfies that
$u\left(T,x\right)=u_0\left(x\right)$, for all $x\in\R^d$, is called
a stochastic viscosity supersolution of MSPDIE (\ref{i11}) if
\begin{eqnarray*}
u(\tau,\xi)&\in & {\rm Dom}(\varphi),\;\;\;\;\;\;
\forall\;(\tau,\xi)\in\mathcal{M}_{0,T}^B\times
L^2\left(\mathcal{F}^{B}_{\tau};\R^d\right),\;\;\; \P\mbox{-a.s.},
\end{eqnarray*}
and at any $(\tau,\xi)\in\mathcal{M}_{0,T}^B\times
L^2\left(\mathcal{F}^{B}_{\tau};\R^d\right)$, for any $(a,
p,X)\in\mathcal{J}^{1,2,-}_{g} u(\tau,\xi)$, it hold $\P$-a.s.
\begin{eqnarray}\label{E:def01}
V(\tau,\xi,a,p,X)+\varphi'_r(u(\tau,\xi)-\frac{1}{2}(g\partial_ug)(\tau,\xi,u(\tau,\xi))\geq
0;
\end{eqnarray}
(3) A random field $u \in \mathcal{C}\left(\mathbf{F}^B, [0,T]\times
\R^d\right)$ is called a stochastic viscosity solution of MSPDIE
(\ref{i1}) if it is both a stochastic viscosity subsolution and a
stochastic viscosity supersolution.
\end{definition}
\begin{remark}
Observe that if $f$ is deterministic and $g\equiv 0$, Definition\,
\ref{defvisco} becomes the generalization of the definition of (deterministic)
viscosity solution of MPDIE given by N'zi and
 Ouknine in \cite{Mn2}.
\end{remark}
To end this section, we state the notion of random viscosity
solution which will be a bridge link to the stochastic viscosity
solution and its deterministic counterpart.
\begin{definition}
A random field $u\in C({\bf F}^B, [0,T]\times\R^d)$ is called an
$\omega$-wise viscosity solution if for $\P$-almost all $\omega\in
\Omega,\;  u(\omega,\cdot,\cdot)$ is a (deterministic) viscosity
solution of MSPDIE \eqref{i11}.
\end{definition}

\subsection{Doss-Sussmann transformation}
In this section, using the Doss Sussmann transformation, our aim is
to convert a multi-valued SPDIE to a PDIE with random coefficients so
that the stochastic viscosity solution can be studied
$\omega$-wisely. We first establish the link between the $g$-super
or sub jet of $u$ the solution to multi-valued SPDIE and super or sub
jet of $v$ solution to the converter PDIE with random coefficients.
For instance let us consider the stochastic flow $\eta\in C({\bf
F}^B, [0, T]\times\R^d\times\R)$, unique solution of the following
stochastic differential equation in the Stratonovich sense:
\begin{eqnarray}
\eta(t,x,y)&=&y+\int_t^T\langle g(s,x,\eta(s,x,y)), \circ {\rm
d}B_s\rangle,\label{p1}
\end{eqnarray}
where (\ref{p1}) should be viewed as going from $T$ to $t$ (i.e $y$
should be understood as the initial value). Under the assumption
$({H7})$, the mapping $y\mapsto \eta(t,x,y)$ defines a
diffeomorphism for all $(t,x),\; \P$-a.s. such that its $y$-inverse
$\varepsilon(t,x,y)$ is the solution to the following first-order
SPDE:
\begin{eqnarray*}
\varepsilon(t,x,y)=y-\int_t^T\langle D_y\varepsilon(s,x,y),\,
g(s,x,\eta(s,x,y)) \circ {\rm d}B_s\rangle.
\end{eqnarray*}
We refer the reader to their paper \cite{BM1} for a lucid discussion
on this topic. We have
\begin{proposition}\label{Propo}
Assume that the assumptions {\rm(H1)--(H7)} hold. If for
$(\tau,\xi)\in \mathcal{M}_{0,T}^B\times
L^2\left(\mathcal{F}^{B}_{\tau}; \R^d\right)$,
$u\in\mathcal{C}\left(\mathbf{F}^B, [0,T]\times\R^d\right)$ and
$(a_u,X_u,p_u)$ belongs to $\mathcal{J}^{1,2,+}_{g} u(\tau,\xi),$
then $(a_v,X_v,p_v)$ belongs to $\mathcal{J}^{1,2,+}_{0}
v(\tau,\xi)$, with $v(\cdot,\cdot) = \varepsilon(\cdot,\cdot,
u(\cdot,\cdot))$ and
\begin{eqnarray*}
\left\{
\begin{array}{l}
a_v=D_y\varepsilon(\tau,\xi,u(\tau,\xi))a_u\\\\
p_v=D_y\varepsilon(\tau,\xi,u(\tau,\xi))p_u+D_x\varepsilon(\tau,\xi,u(\tau,\xi))\\\\
X_v=D_y\varepsilon(\tau,\xi,u(\tau,\xi))X_u+2D_{xy}\varepsilon(\tau,\xi,u(\tau,\xi))p^*_u
\\~~~~~~~+D_{xx}\varepsilon(\tau,\xi,u(\tau,\xi))
+D_{yy}\varepsilon(\tau,\xi,u(\tau,\xi))p_up_u^*.
\end{array}\right.
\end{eqnarray*}
Conversely, if for $(\tau,\xi)\in \mathcal{M}_{0,T}^B\times
L^2\left(\mathcal{F}^{B}_{\tau}; \R^d\right)$, $v\in
\mathcal{C}\left(\mathbf{F}^B, [0,T]\times\R^d\right)$ and
$(a_v,X_v,p_v)\in\mathcal{J}^{1,2,+}_{0} v(\tau,\xi)$, then
$(a_u,X_u,p_u)\in\mathcal{J}^{1,2,+}_{g} u(\tau,\xi)$ with
$u(\cdot,\cdot) = \eta(\cdot,\cdot, v(\cdot,\cdot))$ and
\begin{eqnarray*}
\left\{
\begin{array}{l}
a_u=D_y\eta(\tau,\xi,v(\tau,\xi))a_v\\\\
p_u=D_y\eta(\tau,\xi,v(\tau,\xi))p_v+D_x\eta(\tau,\xi,v(\tau,\xi))\\\\
X_u=D_y\eta(\tau,\xi,v(\tau,\xi))X_v+2D_{xy}\eta(\tau,\xi,v(\tau,\xi))p^*_v\\~~~~~~~+D_{xx}\eta(\tau,\xi,v(\tau,\xi))
+D_{yy}\eta(\tau,\xi,v(\tau,\xi))p_vp_v^*.
\end{array}\right.
\end{eqnarray*}
\end{proposition}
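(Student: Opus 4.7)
The plan is to propagate the $g$-superjet inequality for $u$ through the Doss-Sussmann transformation $v=\varepsilon(\cdot,\cdot,u(\cdot,\cdot))$ and verify that every Brownian increment $(B_{\tau_k}-B_\tau)$ generated in the process is cancelled by the stochastic $B$-dynamics of $\varepsilon$, leaving an inequality of ordinary ($0$-superjet) form for $v$. The computation is in essence an Itô/Stratonovich Taylor expansion of a composition, combined with the defining first-order SPDE for $\varepsilon$ and the chain-rule identities coming from $\eta\circ\varepsilon=\mathrm{id}$ in the $y$-slot.

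Fix $(a_u,p_u,X_u)\in\mathcal{J}^{1,2,+}_g u(\tau,\xi)$ and any $(\tau,\xi)$-approximating sequence $(\tau_k,\xi_k)$. First I would apply the increasing map $y\mapsto\varepsilon(\tau_k,\xi_k,y)$ (increasing because $D_y\eta>0$ under (H7), hence $D_y\varepsilon>0$) to the jet inequality \eqref{jet}, producing an upper bound for $v(\tau_k,\xi_k)=\varepsilon(\tau_k,\xi_k,u(\tau_k,\xi_k))$. Next I would Taylor-expand $\varepsilon(\tau_k,\xi_k,\cdot)$ around $(\tau,\xi,u(\tau,\xi))$: the expansion in the $x$-slot is classical since $\varepsilon\in\mathcal{C}_b^{0,2,3}$, while the time increment is handled via the SPDE
\begin{equation*}
\varepsilon(t,x,y)=y-\int_t^T D_y\varepsilon(s,x,y)\,g(s,x,\eta(s,x,y))\circ\mathrm{d}B_s,
\end{equation*}
which contributes, to leading order, $D_y\varepsilon(\tau,\xi,u(\tau,\xi))\,g(\tau,\xi,\eta(\tau,\xi,u(\tau,\xi)))(B_{\tau_k}-B_\tau)$ together with a quadratic $(B_{\tau_k}-B_\tau)^2$ correction obtained by Stratonovich-Itô conversion. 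Differentiating the SPDE in $x$ and $y$ supplies the analogous expansions of $D_x\varepsilon$ and $D_{xy}\varepsilon$ needed to treat the mixed terms $\langle\xi_k-\xi,\cdot\rangle(B_{\tau_k}-B_\tau)$.

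The main obstacle, and the heart of the argument, is the algebraic cancellation that must now take place. Using the identity $\eta(\tau,\xi,v(\tau,\xi))=u(\tau,\xi)$, the coefficient of $(B_{\tau_k}-B_\tau)$ collapses by the very definition of $b=g(\tau,\xi,u(\tau,\xi))$, that of $(B_{\tau_k}-B_\tau)^2$ by the definition of $c=(g\partial_u g)(\tau,\xi,u(\tau,\xi))$, and that of $\langle\xi_k-\xi,\cdot\rangle(B_{\tau_k}-B_\tau)$ by $q=\partial_x g+\partial_u g\,p_u$. Reading off the remaining coefficients of $(\tau_k-\tau)$, of $\xi_k-\xi$ and of its quadratic, and applying the standard chain rule to $v=\varepsilon(\cdot,\cdot,u)$, one recovers precisely the expressions for $a_v,p_v,X_v$ claimed by the proposition, yielding $(a_v,p_v,X_v)\in\mathcal{J}^{1,2,+}_0 v(\tau,\xi)$.

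For the converse direction, the argument is symmetric: starting from $(a_v,p_v,X_v)\in\mathcal{J}^{1,2,+}_0 v(\tau,\xi)$ and writing $u(\tau_k,\xi_k)=\eta(\tau_k,\xi_k,v(\tau_k,\xi_k))$, the same Stratonovich-Taylor expansion, now applied to $\eta$ (which evolves according to $\mathrm{d}_t\eta=-g(t,x,\eta)\circ\mathrm{d}B_t$), injects Brownian terms into the expansion of $u$ whose coefficients match exactly the $(b,c,q)$ structure required in the definition of the $g$-superjet. The formulas for $(a_u,p_u,X_u)$ in terms of $(a_v,p_v,X_v)$ follow from differentiating $\eta\circ\varepsilon=\mathrm{id}$ in $y$ (giving $D_y\eta\cdot D_y\varepsilon=1$) and its iterates, so no further difficulty appears beyond the bookkeeping already carried out in the direct direction.
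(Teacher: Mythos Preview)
The paper does not actually supply a proof of this proposition: it is stated and then immediately followed by the remark leading into Corollary~\ref{corollary4.8}, so the authors are treating it as a known result imported from Buckdahn and Ma~\cite{BMa,BM1,BM2}. Your sketch is essentially the Buckdahn--Ma argument (Stratonovich--Taylor expansion of the composition $v=\varepsilon(\cdot,\cdot,u(\cdot,\cdot))$, with the SPDE for $\varepsilon$ absorbing the Brownian increments so that the $g$-jet collapses to a $0$-jet), and the converse via $\eta$ is likewise the standard route. In that sense your proposal is consistent with what the paper relies on, though it goes well beyond what the paper itself writes down.

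One point worth tightening: applying the monotone map $y\mapsto\varepsilon(\tau_k,\xi_k,y)$ directly to the jet inequality~\eqref{jet} and then Taylor-expanding is slightly delicate, because $\varepsilon$ is nonlinear in $y$ and the right-hand side of~\eqref{jet} already contains quadratic terms in $(B_{\tau_k}-B_\tau)$ and $(\xi_k-\xi)$; composing a nonlinear map with a second-order expansion and then re-expanding to second order requires care to ensure no spurious cross-terms of the wrong order survive. The cleaner version of the argument (and the one Buckdahn--Ma use) expands $\varepsilon(\tau_k,\xi_k,u(\tau_k,\xi_k))$ jointly in all three slots around $(\tau,\xi,u(\tau,\xi))$, treating the increment $u(\tau_k,\xi_k)-u(\tau,\xi)$ as one of the variables and only afterwards substituting the jet inequality for it. This avoids the ``monotone map then expand'' two-step and makes the cancellation of the $b$, $c$, $q$ terms transparent.
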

However, contrary to classical SPDE, the resulting PDIE from MSPDIE
(\ref{i11}) due to Doss-Sussman transformation is not necessarily
MPDIE studied by N'zi and Ouknine (see \cite{Mn2}). Therefore, we
need the following version of viscosity solution for resulting PDIE
obtain by Doss-Sussman transformation.
\begin{corollary}\label{corollary4.8}
Assume that the assumptions {\rm({H1})--({H7})} hold. Let us define
and consider $(\tau,\xi)\in \mathcal{M}_{0,T}^B\times
L^2\left(\mathcal{F}^{B}_{\tau}; \R^d\right)$, $u\in
\mathcal{C}\left(\mathbf{F}^B, [0,T]\times\R^d\right)$.
\begin{description}
\item[\rm(1)] for $(a_u,X_u,p_u)\in\mathcal{J}^{1,2,+}_{g}u(\tau,\xi)$,
$u$ satisfies (\ref{E:def1}) if and only if $v(\cdot,\cdot) = \varepsilon(\cdot,\cdot,
u(\cdot,\cdot))$ satisfies
\begin{eqnarray}
V_{\widetilde{f}}(\tau,\xi,a_v,p_v,X_v)+\frac{\varphi'_l
(\eta(\tau,\xi,v(\tau,\xi))}{D_y\eta(\tau,\xi,v(\tau,\xi))}\leq
0;\label{E:def2}
\end{eqnarray}

\item[\rm(2)] for $(a_u,X_u,p_u)\in\mathcal{J}^{1,2,-}_{g} u(\tau,\xi)$,
$u$ satisfies (\ref{E:def01}) if and only if $v(\cdot,\cdot) = \varepsilon(\cdot,\cdot,
u(\cdot,\cdot))$ satisfies
\begin{eqnarray}\label{E:def02}
V_{\widetilde{f}}(\tau,\xi,a_v,p_v,X_v)+\frac{\varphi'_l
(\eta(\tau,\xi,v(\tau,\xi))}{D_y\eta(\tau,\xi,v(\tau,\xi))}\geq
0;
\end{eqnarray}
where $(a_v, p_v,X_v)$ is defined by Proposition \ref{Propo} and
\begin{eqnarray*}
\widetilde{f}(t,x,y,(\theta^k)_{k\geq 1})&=&\frac{1}{D_y\eta(t,x,y)}\left[f\left(t,x,\eta(t,x,y),D_y\eta(t,x,y)\theta^{k}+\eta^1_k(t,x,y)\right)\right.\\
&&\left.-\frac{1}{2}(g\partial_u g)(t,x,\eta(t,x,y))+L_x\eta(t,x,y)+\lambda\langle\sigma^*(x)D_{xy}\eta(t,x,y),\sigma(x)p_v\rangle\right.\\
&&\left.+\frac{1}{2}\lambda D_{yy}\eta(t,x,y)|\sigma(x)p_v|^2\right].
\end{eqnarray*}
with $\theta^k=\int_{\R}\langle p_v,\sigma(x)u\rangle p_k(u)\nu(du)$ and $\lambda=1+\int_{\R}u^2\nu(du)$.
\end{description}
\end{corollary}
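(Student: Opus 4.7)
\textbf{Plan of proof for Corollary \ref{corollary4.8}.} The strategy is purely algebraic: I will take the stochastic viscosity inequality for $u$, substitute the jet-transformation formulas supplied by Proposition \ref{Propo}, and then divide through by $D_y\eta(\tau,\xi,v(\tau,\xi))$, which is strictly positive because $y\mapsto \eta(t,x,y)$ is an orientation-preserving diffeomorphism (this is where assumption (H7) enters through $\varepsilon$ and $\eta$ being of class $\mathcal{C}^{0,2,3}_b$). Since the two statements are \emph{equivalences}, I will in fact establish one direction in detail and obtain the converse by replacing $\eta$ with $\varepsilon$ and using $D_y\varepsilon\cdot D_y\eta=1$.

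Concretely, fix $(\tau,\xi,a_u,p_u,X_u)\in\mathcal{J}^{1,2,+}_{g}u(\tau,\xi)$ and set $u(\tau,\xi)=\eta(\tau,\xi,v(\tau,\xi))$. First I would expand $V_f(\tau,\xi,a_u,p_u,X_u)$ term by term, substituting
\[
a_u=D_y\eta\, a_v,\quad p_u=D_y\eta\, p_v+D_x\eta,\quad X_u=D_y\eta\, X_v+2D_{xy}\eta\, p_v^\ast+D_{xx}\eta+D_{yy}\eta\, p_v p_v^\ast,
\]
where every derivative of $\eta$ is evaluated at $(\tau,\xi,v(\tau,\xi))$. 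The first-order part $m_1\langle p_u,\sigma(\xi)\rangle$ and the Trace term immediately split into a piece of the form $D_y\eta\cdot(\text{the corresponding term in }V_0(\tau,\xi,a_v,p_v,X_v))$ plus residual terms involving $D_x\eta$, $D_{xx}\eta$, $D_{xy}\eta$ and $D_{yy}\eta$; these residuals are exactly $L_x\eta$ together with the cross and quadratic terms $\lambda\langle\sigma^{\ast}D_{xy}\eta,\sigma p_v\rangle$ and $\tfrac{1}{2}\lambda D_{yy}\eta|\sigma p_v|^{2}$ appearing in $\widetilde{f}$. The same substitution in $f\bigl(\tau,\xi,u,\int\langle p_u,\sigma y\rangle p_k(y)\nu(dy)\bigr)$ produces the argument $D_y\eta\,\theta^k+\eta^1_k$. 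Finally the $-\tfrac12(g\partial_u g)$ term sitting in the subsolution inequality (\ref{E:def1}) is the Itô--Stratonovich correction attached to the flow $\eta$, and I place it inside $\widetilde{f}$.

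Dividing the resulting identity by $D_y\eta(\tau,\xi,v(\tau,\xi))>0$ turns the $V_f$-piece into precisely $V_{\widetilde f}(\tau,\xi,a_v,p_v,X_v)$, and converts $\varphi'_l(u(\tau,\xi))=\varphi'_l(\eta(\tau,\xi,v(\tau,\xi)))$ into $\varphi'_l(\eta(\tau,\xi,v(\tau,\xi)))/D_y\eta(\tau,\xi,v(\tau,\xi))$, which is exactly the form appearing in (\ref{E:def2}). The supersolution case (2) is obtained by the same manipulation with the inequality reversed and $\mathcal{J}^{1,2,+}_g$ replaced by $\mathcal{J}^{1,2,-}_g$ (modulo the remark that the right derivative should in principle appear on that side).

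\textbf{Expected main obstacle.} The routine but delicate step is the bookkeeping of the second-order and Lévy-integral terms: matching
\[
\tfrac12\mathrm{Trace}(\sigma\sigma^\ast X_u)+\tfrac12\int_{\R}\langle X_u\sigma,\sigma\rangle y^2\nu(dy)
\]
with $\tfrac12\mathrm{Trace}(\sigma\sigma^\ast X_v)+\tfrac12\int_{\R}\langle X_v\sigma,\sigma\rangle y^2\nu(dy)$ requires grouping the $D_{xx}\eta$ contribution with the Lévy integral to produce the factor $\lambda=1+\int u^2\nu(du)$, and similarly for the $D_{yy}\eta$ and $D_{xy}\eta$ pieces. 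Verifying that these residues assemble exactly into the three correction terms defining $\widetilde f$ (and not into an extra term spoiling the identification with $V_{\widetilde f}$) is the core of the computation; it rests on nothing deeper than the Stratonovich SDE (\ref{p1}) defining $\eta$ and on the inversion identity $D_y\varepsilon\cdot D_y\eta\equiv 1$, but it has to be executed carefully coordinate by coordinate.
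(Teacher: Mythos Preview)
Your proposal is correct and follows exactly the paper's approach: invoke the jet-transformation formulas of Proposition \ref{Propo}, use the positivity of $D_y\eta$ to divide through, and thereby rewrite the $V_f$-inequality as the $V_{\widetilde f}$-inequality, with the converse and part (2) handled symmetrically. The paper's own proof compresses the entire computation you outline into the phrase ``by little calculation,'' so your detailed bookkeeping of the second-order and L\'evy-integral terms simply makes explicit what the authors leave to the reader.
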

\begin{proof}
Let $(\tau,\xi)\in \mathcal{M}_{0,T}^B\times
L^2\left(\mathcal{F}^{B}_{\tau}; \R^d\right)$ be given and $(a_u,
p_u,X_u)\in\mathcal{J}^{1,2,+}_{g}u(\tau,\xi)$. We assume that $u$
is a stochastic subsolution of MSPDIE (\ref{i11}), i.e.
\begin{eqnarray*}
u(\tau,\xi)&\in & {\rm Dom}(\varphi),\;\;\;\;\;\; \forall\;
(\tau,\xi)\in\mathcal{M}_{0,T}^B\times
L^2\left(\mathcal{F}^{B}_{\tau};\R^d\right),\;\;\; \P\mbox{-a.s.},
\end{eqnarray*}
such that
\begin{eqnarray*}
V_{f}(\tau,\xi,a,p,X)+\varphi'_l(u(\tau,\xi)-\frac{1}{2}(g\partial_u\
g)(\tau,\xi,u(\tau,\xi))\leq 0,\,\; \P\mbox{-a.s.}
\end{eqnarray*}
In view of Proposition \ref{Propo} and since $D_y\eta(t,x,y)>0,$ for
all $(t,x,y)$, we obtain by little calculation
\begin{eqnarray*}
V_{\widetilde{f}}(\tau,\xi,a_v,p_v,X_v)+\frac{\varphi'_{l}(\eta(\tau,\xi,v(\tau,\xi)))}{D_y\eta(\tau,\xi,v(\tau,\xi))}\leq
0.
\end{eqnarray*}
The
converse part of (1) can be proved similarly. In the same manner one
can show the second assertion (2).
\end{proof}
\section{Probabilistic representation result for stochastic viscosity solution to MSPDIEs}
In this section, we aim to show that the solution of multivalued
BDSDE with jump gives the viscosity solution of a semi-linear MSPDIE
in the Markovian case.
\subsection{A class of reflected diffusion process}
We now introduce a class of diffusion process. Let
$\sigma:\R^d\rightarrow\R^{d}$ be a uniformly bounded
function satisfying the uniform Lipschitz condition with some
constant $C > 0$, for all $x, y\in\R^d$:
\begin{eqnarray}
|\sigma(x)-\sigma(y)|\leq C|x-y|.
\end{eqnarray}
For each $(t,x)\in[0,T]\times \R^d$, from \cite{Slo} and reference therein, let $\{X^{t,x}_s, s\in[t,T]\}$
be a unique pair of progressively measurable process, which is a
solution to the following stochastic differential equation:
\begin{eqnarray}
X_s^{t,x} = x+\int^{s}_t \sigma(X_{r^-}^{t,x})\,{\rm
d}L_r.\label{rSDE}
\end{eqnarray}
Furthermore, we have the following proposition.
\begin{proposition}\label{P:continuity00}
There exists a constant $ C>0 $ such that for all $0\leq t<t'\leq T$
and $x,\,x'\in\R^d$, such that
$$\mathbb{E}\left[\sup_{0\leq s\leq
T}\left|X^{t,x}_{s}-X^{t',x'}_{s}\right|^4\right] \leq C\left(
|t'-t|^2+|x-x'|^4\right)$$
\end{proposition}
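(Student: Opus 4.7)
The plan is to reduce the estimate to two separate pieces via the triangle inequality. Writing
$$|X_s^{t,x} - X_s^{t',x'}| \leq |X_s^{t,x} - X_s^{t,x'}| + |X_s^{t,x'} - X_s^{t',x'}|$$
and using $(a+b)^4 \leq 8(a^4+b^4)$, it suffices to bound separately $\mathbb{E}\sup_s|X_s^{t,x} - X_s^{t,x'}|^4$ by $C|x-x'|^4$ and $\mathbb{E}\sup_s|X_s^{t,x'} - X_s^{t',x'}|^4$ by $C|t-t'|^2$, with the convention $X_s^{t,x}=x$ for $s\le t$.

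For the first piece (continuity in the initial value at a fixed initial time), I would decompose $L_r = m_1 r + \tilde L_r$, with $\tilde L$ a square-integrable martingale, subtract the two SDEs, apply the Burkholder--Davis--Gundy inequality to the martingale integral and H\"older's inequality to the drift integral, and use the Lipschitz property of $\sigma$. Since the exponential-moment hypothesis on $\nu$ in Section~2 guarantees that $L$ has finite moments of every order, this produces an inequality of the form
$$\Psi(u) \leq C|x-x'|^4 + C\int_t^u \Psi(v)\,dv, \qquad \Psi(u) := \mathbb{E}\sup_{t \leq s \leq u}|X_s^{t,x} - X_s^{t,x'}|^4,$$
and Gronwall's lemma then yields $\Psi(T) \leq C|x-x'|^4$.

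For the second piece (continuity in the initial time at a fixed initial value), assume without loss of generality $t<t'$. On $[0,t]$ both trajectories equal $x'$. On $[t,t']$ we have $X_s^{t',x'}=x'$ while $X_s^{t,x'}-x' = \int_t^s \sigma(X_{r^-}^{t,x'})\,dL_r$; controlling the fourth moment of the supremum of this stochastic integral via BDG, the uniform boundedness of $\sigma$, and the a priori moment bound for $X^{t,x'}$, gives a bound of order $|t'-t|^2$. On $[t',T]$ both trajectories evolve and their difference solves an integral equation whose effective ``initial datum'' at time $t'$ is the discrepancy accumulated on $[t,t']$; the Gronwall-type argument of the first step then propagates this discrepancy up to $T$ without changing its order.

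The main obstacle is obtaining the correct power $|t'-t|^2$ from the stochastic integral on $[t,t']$, since the driver $L$ is a pure-jump L\'evy process rather than a Brownian motion. The key is to split off cleanly the finite-variation compensator $m_1(t'-t)$, whose fourth moment contributes at most $C|t'-t|^4$, from the pure-jump martingale part, whose fourth moment is estimated by BDG combined with the finiteness of all moments of $L$. Assembling the $[t,t']$ bound, the Gronwall step on $[t',T]$, and the first piece, and absorbing constants, yields the stated estimate.
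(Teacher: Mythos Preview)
The paper states this proposition without proof, treating it as a standard moment estimate for the SDE driven by $L$ (note the reference to \cite{Slo} immediately preceding it). There is therefore no argument in the paper to compare yours against; your overall scheme --- split into $x$-regularity and $t$-regularity, then BDG plus Lipschitz plus Gronwall --- is indeed the standard one and is sound in structure.

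However, your treatment of the time-regularity piece contains a genuine gap, and in fact the exponent $|t'-t|^{2}$ in the statement cannot be achieved when $L$ has jumps. You assert that after removing the drift $m_{1}r$, BDG applied to the martingale part $\tilde L$ over $[t,t']$ yields a fourth-moment bound of order $|t'-t|^{2}$. This is Brownian intuition and it fails here. Concretely, BDG together with the boundedness of $\sigma$ gives
\[
\E\sup_{t\le s\le t'}\Bigl|\int_t^s\sigma(X^{t,x'}_{r^-})\,{\rm d}\tilde L_r\Bigr|^{4}
\;\le\; C\,\E\bigl([\tilde L]_{t'}-[\tilde L]_t\bigr)^{2},
\]
and $[\tilde L]_{t'}-[\tilde L]_t=\kappa^{2}(t'-t)+\sum_{t<r\le t'}(\Delta L_r)^{2}$. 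The second moment of the jump sum equals $c_{4}(t'-t)+c_{2}^{2}(t'-t)^{2}$ with $c_{k}=\int y^{k}\nu({\rm d}y)$; the leading term is \emph{linear} in $t'-t$. Equivalently, the fourth cumulant of $\tilde L_{h}$ is $h\int y^{4}\nu({\rm d}y)$, so $\E|\tilde L_{h}|^{4}=c_{4}h+O(h^{2})$ as $h\to 0$. For a compensated Poisson process this is the familiar $\E(N_{h}-\lambda h)^{4}=\lambda h+3\lambda^{2}h^{2}$. Taking $\sigma$ to be a nonzero constant already shows that $\E\sup_{s}|X^{t,x}_{s}-X^{t',x}_{s}|^{4}$ is genuinely of order $|t'-t|$, so the inequality with $|t'-t|^{2}$ cannot hold in general.

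Your argument therefore establishes the estimate with $|t'-t|$ in place of $|t'-t|^{2}$, and no refinement of the BDG step will recover the square. This weaker bound is what one should expect for L\'evy-driven SDEs and is still sufficient for the only use the paper makes of the proposition (joint continuity of $(t,x)\mapsto \E'(Y^{t,x}_{s})$ via a Kolmogorov-type criterion), so the defect lies in the exponent claimed rather than in your method.
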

\subsection{Existence of viscosity solution for MSPDIEs}
Fix $T > 0$ and for all $(t, x)\in [0,T]\times\R^d$, let
$X^{t,x}_s,\; s\in [t,T]$ denote the solution of the SDE
(\ref{rSDE}). And we suppose now that the data $(\xi,f , g)$ of the
multi-valued BDSDE with jump take the form
\begin{eqnarray*}
\xi&=&u_0(X^{t,x}_T),\\
f(s,y,z)&=&f(s,X_s^{t,x},y,z),\\
g(s,y)&=&f(s,X_s^{t,x},y).
\end{eqnarray*}
And we give the following assumptions:\newline We assume that
$u_0\in C(\R^d;\R),\, f\in
C([0,T]\times\R^d\times\R\times\ell^2;\R)$ and $g\in
C([0,T]\times\R^d\times\R;\R)$ such that assumptions {\rm
(H1)--(H7)} hold. It follows from the results of the Section 3 that,
for all $(t, x)\in[0, T]\times\R^d$, there exists a unique triplet
$(Y^{t,x},Z^{t,x}, U^{t,x})$ for the solution of the following
\newline $
\begin{array}{ll}
\displaystyle (1)\ \ (Y^{t,x}_s,U^{t,x}_s)\in \partial \varphi, \
\,{\rm d}\mathbb{P}\otimes
\,{\rm d}s,\mbox{-a.e. on}\ [t,T]\\
\displaystyle (2)\ \ Y^{t,x}_s+\int_s^TU^{t,x}_r\,{\rm
d}r=u_0(X^{t,x}_T)+\int_s^Tf(r,X^{t,x}_r,Y^{t,x}_r,Z^{t,x}_r)\,{\rm
d}r+\int_s^Tg(r,X^{t,x}_r,Y^{t,x}_r)\,{\rm
d}B_r\\\qquad\qquad\qquad\qquad\qquad
\displaystyle-\sum_{i=1}^{\infty}\int_s^T(Z^{t,x})^{(i)}_r\,{\rm
d}H^{(i)}_r,\ t\leq s \leq T.\label{eqmarkov1}
\end{array}
$
\\
We extend processes $Y^{t,x},\, Z^{t,x},\,U^{t,x}$ on $[0,T]$ by
putting $ Y^{t,x}_s=Y^{t,x}_t,\, Z^{t,x}_s=0,\,\,U^{t,x}_s=0,\;\;
s\in [0,t]$.

We have this result whose proof is similar to that of Theorem 2.1 appear in \cite{PP}
\begin{proposition}\label{Prop}
Let the ordered triplet $(Y^{t,x}_s, U^{t,x}_s,
Z^{t,x}_s)$ be the unique solution of the multi-valued BDSDE
\eqref{eqmarkov1}. Then, for $(s, t, x)\in[0, T ]\times[0, T
]\times\R^d$, the random field $(s, t, x)\mapsto
\E'(Y^{t,x}_s)$ is a.s. continuous ($Y^{t,x}$ has jump), where $\E'$
is the expectation with respect to $\P'$, introduced at page 3.
\end{proposition}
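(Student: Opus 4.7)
The plan is to adapt the Pardoux--Peng argument \cite{PP} (Theorem 2.1) to our multi-valued L\'evy-driven setting, establishing joint continuity of $(s,t,x)\mapsto\mathbb{E}'(Y^{t,x}_s)$ in two stages: (i) strong continuity of $(t,x)\mapsto Y^{t,x}_{\cdot}$ in $S^2$, uniformly in $s$, and (ii) continuity in $s$ of $\mathbb{E}'(Y^{t,x}_s)$ for each fixed $(t,x)$. A standard $\varepsilon/2$-decomposition then combines (i) and (ii) into joint continuity in probability, and a subsequence extraction lifts it to the stated almost-sure continuity.

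For step (i), I would apply It\^o's formula to $|Y^{t,x}_s - Y^{t',x'}_s|^2$ on $[s,T]$. Monotonicity of $\partial\varphi$ yields
\begin{equation*}
\int_s^T\bigl(Y^{t,x}_r - Y^{t',x'}_r\bigr)\bigl(U^{t,x}_r - U^{t',x'}_r\bigr)\,dr \geq 0,
\end{equation*}
so this cross-term is moved to the left and discarded. The remaining terms are controlled, via (H2) and the $\alpha<1$ absorption trick already used in Lemma \ref{lam3.3}, by
\begin{equation*}
C\,\mathbb{E}\!\left[|u_0(X^{t,x}_T)-u_0(X^{t',x'}_T)|^2 + \int_0^T|X^{t,x}_r - X^{t',x'}_r|^2\,dr\right] + C\,\mathbb{E}\!\int_s^T|Y^{t,x}_r - Y^{t',x'}_r|^2\,dr.
\end{equation*}
Gronwall's inequality and Burkholder--Davis--Gundy then give an $S^2$-bound on $Y^{t,x}-Y^{t',x'}$ that tends to $0$ as $(t',x')\to(t,x)$, using Proposition \ref{P:continuity00} and the Lipschitz/linear growth control of $u_0$ from (H5)--(H6). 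For the extension convention $Y^{t,x}_s = Y^{t,x}_t$ on $\{s\leq t\}$, one controls the increments of $Y^{t,x}$ on $[t\wedge t',\,t\vee t']$ via the a priori bound of Lemma \ref{lam3.1}.

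For step (ii), the jumps of $s\mapsto Y^{t,x}_s$ arise only from the Teugels martingale integral $\sum_i\int Z^{t,x,(i)}_r\,dH^{(i)}_r$; the drift term $\int U^{t,x}_r\,dr$ and the backward Brownian integral are continuous in $s$. Since the $H^{(i)}$ are $\mathbb{P}'$-martingales and $Z^{t,x,(i)}_r(\omega,\cdot)$ is $\mathcal{F}^L_r$-adapted for each fixed $\omega$, taking $\mathbb{E}'$ annihilates the jump term, leaving
\begin{equation*}
\mathbb{E}'(Y^{t,x}_s) = \mathbb{E}'(u_0(X^{t,x}_T)) + \mathbb{E}'\!\int_s^T\bigl[f(r,X^{t,x}_r,Y^{t,x}_r,Z^{t,x}_r)-U^{t,x}_r\bigr]\,dr + \mathbb{E}'\!\int_s^T g(r,X^{t,x}_r,Y^{t,x}_r)\,dB_r,
\end{equation*}
which is continuous in $s$ by dominated convergence (the integrands lie in $\mathcal{H}^2$) and by pathwise continuity of the backward Brownian integral in its lower limit. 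The main obstacle is in step (i): because $U^{t,x}$ is only in $\mathcal{H}^2$ and cannot be bounded pathwise, the whole estimate relies crucially on the monotonicity identity for $\partial\varphi$ to make the $U$-difference vanish without any quantitative control. Reconciling this with the L\'evy jump structure---where the quadratic variation $d[H^{(i)},H^{(i)}]_r$ feeds directly into the $\|Z\|_{\ell^2}^2$ term on the left-hand side of It\^o's formula---is what makes the absorption constants delicate, but repeats verbatim the calculation already carried out in Lemmas \ref{lam3.1}--\ref{lam3.3}.
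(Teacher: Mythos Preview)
Your proposal is correct and takes essentially the same approach as the paper, which does not give a proof but simply states that the argument is similar to that of Theorem 2.1 in Pardoux--Peng \cite{PP}. Your sketch is in fact a faithful expansion of that reference to the present multi-valued L\'evy setting, with the monotonicity of $\partial\varphi$ disposing of the $U$-difference exactly as in Lemma \ref{lam3.3} and the partial expectation $\mathbb{E}'$ removing the Teugels-martingale jumps.
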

We are ready now to derive our main result in this section.
\begin{theorem}
Assume the assumptions {\rm(H1)--(H7)} be satisfied. Then, the
function $u(t, x)$ defined by $u(t, x)=Y^{t,x}_t$ is a stochastic
viscosity solution of MSPDIE \eqref{i11}
\end{theorem}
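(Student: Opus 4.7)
The plan is to reduce the multivalued stochastic equation to a pathwise (\emph{i.e.}\ $\omega$-wise) problem via the Doss--Sussmann transform, and then obtain the viscosity inequalities through a Yosida approximation on the BDSDE side, for which the non-multivalued case is already available. Concretely, set $u(t,x):=Y^{t,x}_{t}$ and $v(t,x):=\varepsilon(t,x,u(t,x))$, where $\varepsilon$ is the $y$-inverse of the flow $\eta$ introduced in \eqref{p1}. By Corollary~\ref{corollary4.8}, proving that $u$ is a stochastic viscosity solution of \eqref{i11} is equivalent to showing that $v$ is an $\omega$-wise viscosity solution (in the sense of \eqref{E:def2}--\eqref{E:def02}) of the PDIE with random coefficients obtained after the Doss--Sussmann change of variable, whose driver is $\widetilde f$ and whose subdifferential term is $\varphi'_{l,r}(\eta(\cdot,\cdot,v))/D_y\eta(\cdot,\cdot,v)$.

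First I would check that $u$ belongs to $\mathcal C({\bf F}^B,[0,T]\times\R^d)$; this is immediate from Proposition~\ref{Prop} taking $s=t$, together with the fact that $Y^{t,x}_t$ is ${\bf F}^B$-measurable (because at time $t$ the Teugels stochastic integrals and the forward component carry no $\mathcal F^B$-independent randomness beyond the driving Brownian motion). The condition $u(T,x)=u_0(x)$ is built into the terminal value of \eqref{eqmarkov1}, and $u(\tau,\xi)\in\mathrm{Dom}(\varphi)$ a.s.\ for every $(\tau,\xi)\in\mathcal M_{0,T}^B\times L^2(\mathcal F_\tau^B;\R^d)$ follows from the fact that $(Y_s,U_s)\in\partial\varphi$ forces $Y_s\in\mathrm{Dom}(\varphi)\subset\mathrm{Dom}(\varphi)$.

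The technical heart of the proof is the viscosity inequality. For fixed $\varepsilon>0$, consider the Yosida-regularised BDSDE \eqref{eq1} with Markovian data and let $u^{\varepsilon}(t,x):=Y^{\varepsilon,t,x}_t$. Since $\frac1\varepsilon D\varphi_\varepsilon$ is Lipschitz, this is a standard (non-multivalued) BDSDE driven by Teugels martingales, for which the probabilistic interpretation given in Ren~et~al.~\cite{Ren2} applies: $u^\varepsilon$ is a stochastic viscosity solution of the SPDIE
\[
\partial_t u^\varepsilon+\mathcal L u^\varepsilon+f\bigl(t,x,u^\varepsilon,((u^\varepsilon)^1_k)_{k\ge 1}\bigr)-\tfrac1\varepsilon D\varphi_\varepsilon(u^\varepsilon)+g(t,x,u^\varepsilon)\dot B_t=0.
\]
Apply the Doss--Sussmann transform to get $v^\varepsilon(t,x):=\varepsilon(t,x,u^\varepsilon(t,x))$ which, $\omega$-wise, is a (deterministic) viscosity solution of a PDIE with driver $\widetilde f^{\,\varepsilon}$ and zeroth-order term $\frac1\varepsilon D\varphi_\varepsilon(\eta(\cdot,\cdot,v^\varepsilon))/D_y\eta(\cdot,\cdot,v^\varepsilon)$; here Proposition~\ref{Propo} converts $g$-jets of $u^\varepsilon$ into $0$-jets of $v^\varepsilon$ so that the pathwise notion makes sense.

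The last step is the passage to the limit $\varepsilon\downarrow 0$. Lemmas~\ref{lam3.1}--\ref{lam3.3} supply $L^2$-convergence $Y^{\varepsilon,t,x}\to Y^{t,x}$ uniformly in $t$ and $L^2$-boundedness of $U^\varepsilon_t=\frac1\varepsilon D\varphi_\varepsilon(Y^\varepsilon_t)$; combined with Proposition~\ref{Prop} and the Lipschitz-diffeomorphism property of $\eta,\varepsilon$ under~(H7), this yields locally uniform convergence $v^\varepsilon\to v$ on compacts (up to subsequence, in probability). At a test point $(\tau,\xi)$ for a $g$-superjet, I would use the standard viscosity stability argument: let $\psi$ be a smooth test function touching $v$ from above at $(\tau,\xi)$; find perturbed maxima $(\tau^\varepsilon,\xi^\varepsilon)\to(\tau,\xi)$ of $v^\varepsilon-\psi$, write the viscosity inequality for $v^\varepsilon$, and take $\liminf_{\varepsilon\downarrow 0}$. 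The sensitive term is the subdifferential one: by item~(2) of Proposition~\ref{pro2.1}, $U^\varepsilon_t\in\partial\varphi(J_\varepsilon(Y^\varepsilon_t))$ and $J_\varepsilon(Y^\varepsilon_t)\to Y_t$; the lower semicontinuity and convexity of $\varphi$ together with the characterisation $\partial\varphi(y)=[\varphi'_l(y),\varphi'_r(y)]$ (see the Remark) give, after taking $\liminf$, the bound by $\varphi'_l$ required in \eqref{E:def2}. Repeating the argument with a test function touching from below produces the supersolution inequality \eqref{E:def02} with $\varphi'_r$. Corollary~\ref{corollary4.8} then transports these pathwise inequalities back to the stochastic viscosity inequalities \eqref{E:def1}--\eqref{E:def01} for $u$, completing the proof.

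The main obstacle is precisely this limiting step: controlling the subdifferential term in the viscosity framework when only weak convergence of $U^\varepsilon$ is available, and matching the Yosida limit to the one-sided derivatives $\varphi'_l,\varphi'_r$ that appear in Definition~\ref{defvisco}. The remaining ingredients (continuity of $u$, measurability in $\omega$, Doss--Sussmann bookkeeping) are either routine consequences of Proposition~\ref{P:continuity00} and Proposition~\ref{Prop}, or already packaged in Proposition~\ref{Propo} and Corollary~\ref{corollary4.8}.
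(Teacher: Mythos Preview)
Your overall strategy coincides with the paper's: Yosida-regularise the BDSDE, apply the Doss--Sussmann transform to get $v^\delta=\varepsilon(\cdot,\cdot,u^\delta)$ as an $\omega$-wise viscosity solution of a PDIE with random coefficients, invoke the Crandall--Ishii--Lions stability lemma to produce approximating jets $(a_v^n,p_v^n,X_v^n)\in\mathcal J^{1,2,+}_0 v^{\delta_n}(\tau_n,\xi_n)$ along $(\tau_n,\xi_n)\to(\tau,\xi)$, and pass to the limit. The continuity/measurability checks and the use of Proposition~\ref{Propo} and Corollary~\ref{corollary4.8} are also as in the paper.

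The one point where your sketch remains genuinely incomplete is precisely the step you flag as ``the main obstacle'': controlling the term $\frac{1}{\delta_n}D\varphi_{\delta_n}(\eta(\tau_n,\xi_n,v^{\delta_n}(\tau_n,\xi_n)))$ in the pointwise viscosity inequality. Your proposed route (take $\liminf$, use $U^\varepsilon\in\partial\varphi(J_\varepsilon Y^\varepsilon)$ and lower semicontinuity of $\varphi$) does not work directly, because the $L^2$-bound on $U^\varepsilon$ from Lemma~\ref{lam3.2} gives no pointwise control at the single point $(\tau_n,\xi_n)$; the Yosida term can be arbitrarily large there. The paper's device is to avoid estimating this term altogether: fix any $y\in\mathrm{Dom}(\varphi)$ with $y<\eta(\tau,\xi,v(\tau,\xi))$, multiply the viscosity inequality for $v^{\delta_n}$ by the positive scalar $\eta(\tau_n,\xi_n,v^{\delta_n}(\tau_n,\xi_n))-y$, and use the subdifferential inequality for $\frac{1}{\delta_n}D\varphi_{\delta_n}\in\partial\varphi(J_{\delta_n}(\cdot))$ to replace the Yosida term by $\varphi(y)-\varphi(J_{\delta_n}(\eta(\cdots)))$, which \emph{does} pass to the limit by lower semicontinuity of $\varphi$ and the convergence $J_{\delta_n}(\eta(\cdots))\to\eta(\tau,\xi,v(\tau,\xi))$. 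Dividing back and letting $y\uparrow\eta(\tau,\xi,v(\tau,\xi))$ produces the left derivative $\varphi'_l$; the supersolution side is symmetric with $y>\eta(\cdots)$ yielding $\varphi'_r$. Once you insert this trick, your argument is complete and matches the paper's.
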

\begin{proof}
The fact that $u(t,x)=Y^{t,x}_t=\E'(Y^{t,x}_t)$, does not depend on $\omega'$, it follows from Proposition \ref{Prop} that $u\in C(\mathcal{F}^{B},[0,T]\times \R^d)$. Next, for all $(\tau,\xi)\in\mathcal{M}^{B}(0,T)\times
L^{2}(\mathcal{F}^{B},\R^d)$,
\begin{eqnarray*}
\varphi(u(\tau(\omega),\xi(\omega)))=
\varphi\left(Y^{\tau(\omega),\xi(\omega)}_{\tau(\omega)}\right)<\infty,
\  \P\mbox{-a.s.},
\end{eqnarray*}
which implies that $u(\tau,\xi)\in {\rm Dom}(\varphi)\; \P$-a.s.Thus it remains to show that $u$ is the stochastic viscosity solution to MSPDIE
\eqref{i11}. In other word, using Corollary \ref{corollary4.8}, it suffices to prove that $v(t, x) =\varepsilon(t, x, u(t, x))$ satisfies \eqref{E:def2} and
\eqref{E:def02}. In this fact, for each $(t,x)\in[0,T]\times\overline{\Theta},\; \delta>0$, let $\{(Y^{t,x,\delta}_{s},Z^{t,x,\delta}_{s}),\,\ 0\leq s\leq T\}$ denote the solution of the following BDSDE:
\begin{eqnarray}
Y^{t,x,\delta}_s+\frac{1}{\delta}\int_s^TD\varphi_{\delta}(Y^{t,x,\delta}_r)\,{\rm
d}r
&=&u_0(X^{t,x}_T)+\int_s^Tf(r,X^{t,x}_r,Y^{t,x,\delta}_r,Z^{t,x,\delta}_r)\,{\rm
d}r\nonumber\\
&&+\int_s^T g(r,X^{t,x}_r,Y^{t,x,\delta}_r) \,{\rm
d}B_r\nonumber\\
&&-\sum_{i=1}^{\infty}\int_s^T(Z^{t,x,\delta})^{(i)}_r\,{\rm
d}H^{(i)}_r.\label{eqmarkovapp}
\end{eqnarray}
Setting $Y^{t,x,\delta}_t=u^{\delta}(t,x)$, it is shown by Theorem 3.6 in \cite{MY}, that the function $v^{\delta}(t, x) =
\varepsilon(t, x, u^{\delta}(t, x))$ is an $\omega$-wise viscosity
solution to this MSPDIE:
\begin{eqnarray}
\left\{
\begin{array}{l}
(i)\;\displaystyle\left(\frac{\partial v^{\delta}}{\partial
t}(t,x)-\left[
\mathcal{L}v^{\delta}(t,x)+\widetilde{f}_{\delta}(t,x,v^{\delta}(t,x),\sigma^{*}(x)\nabla
v^{\delta}(t,x))\right]\right)=0,\,\,\
(t,x)\in[0,T]\times\R^d,\\\\
(ii)\; v(T,x)=u_{0}(x),\,\,\,\,\,\,\ x\in\R^d,
\end{array}\right.\label{SPDE}
\end{eqnarray}
where
$$\widetilde{f}_{\delta}(t,x,y,z)=\widetilde{f}(t,x,y,z)-\frac{\frac {1}{\delta} D\varphi_{\delta}(\eta(t,x,y))}{D_y\eta(t,x,y)}.$$
Moreover, letting Lemma 3.5 provide, along a subsequence that
\begin{eqnarray}
|v^{\delta}(t,x)-v(t,x)|\rightarrow 0, \ \mbox{a.s.,\ as}\
\delta\rightarrow 0, \label{CV}
\end{eqnarray}
for all $(t,x)\in[0,T]\times\R^d$.

On the other hand, for all $(\tau,\xi)\in\mathcal{M}^{B}(0,T)\times
L^{2}(\mathcal{F}^{B},\R^d)$ and $\omega\in\Omega$ be fixed, let consider
$(a_v,p_v,X_v)\in\mathcal{J}^{1,2,+}_{0}(v(\tau(\omega),\xi(\omega)))$.
Thus, since $v^{\delta}$ is an $\omega$-wise viscosity
solution to the MSPDIE \eqref{SPDE}, and by Crandall- Ishii-Lions in \cite{CIL}, there
exist sequences
\begin{eqnarray*}
\left\{
\begin{array}{ll}
\delta_n(\omega)\searrow 0,\\\\
(\tau_n(\omega),\xi_n(\omega))\in[0,T]\times\R^d,\\\\
(a_v^{n},p_v^{n},X_v^{n})\in\mathcal{J}^{1,2,+}_{0}(v^{\delta_n}(\tau_n(\omega),\xi_n(\omega)))
\end{array}
\right.
\end{eqnarray*}
satisfying
\begin{eqnarray*}
(\tau_n(\omega),\xi_n(\omega),a_v^{n},p_v^{n},X_v^{n},v^{\delta_n}(\tau_n(\omega),
\xi_n(\omega)))&\to&
(\tau(\omega),\xi(\omega),a_v,p_v,X_v,v(\tau(\omega),\xi(\omega)))\\
n&\to& \infty,
\end{eqnarray*}
such that for $(\tau_n(\omega),\xi_n(\omega))\in [0,T]\times\R^d$,
\begin{eqnarray}
V_{\widetilde{f}(\omega)}(\tau_n(\omega),\xi_n(\omega),a^n_v,X^n_v,p^n_v)+\frac{\frac {1}{\delta_n} D\varphi_{\delta_n}(\eta(\tau_n(\omega),\xi_n(\omega),v^{\delta_n}
(\tau_n(\omega),\xi_n(\omega))))}
{D_y\eta(\tau_n(\omega),\xi_n(\omega),v^{\delta_n}(\tau_n(\omega),\xi_n(\omega)))}
\leq 0.\label{V01}
\end{eqnarray}
In order to simplify the notation, we remove the
dependence of $\omega$. Let $y\in {\rm Dom}(\varphi)$ such that
$y< u(\tau,\xi)=\eta(\tau,\xi,v(\tau,\xi))$. The ucp convergence
of $v^{\delta_n}$ to $v$ implies that there exists $n_0 > 0$ such
that  $\forall\, n\geq n_0,\;\; y<\eta(\tau_n,\xi_n,v^{\delta_n}(\tau_n,\xi_n))$. Therefore, inequality (\ref{V01}) yields
\begin{eqnarray*}
&&\left(\eta(\tau_n,\xi_n,v^{\delta_n}(\tau_n,\xi_n))-y\right)
V_{\widetilde{f}_{\delta_n}}\left(\tau_n,\xi_n,a^n_v,X^n_v,p^n_v\right)\nonumber\\
&\leq&
\left[\varphi(y)-\varphi(J_{\delta_n}(\eta(\tau,\xi,v^{\delta_n}(\tau,\xi))))\right]\frac{1}
{D_y\eta(\tau_n,\xi_n,v^{\delta_n}(\tau_n,\xi_n))}.\label{V1}
\end{eqnarray*}
Taking the limit in this last inequality, we get for all
$y<\eta(\tau,\xi,v(\tau,\xi))$
\begin{eqnarray*}
V_{\widetilde{f}}(\tau,\xi,a_v,X_v,p_v)\leq
-\frac{\varphi(\eta(\tau,\xi,v(\tau,\xi)))-\varphi(y)}{\eta(\tau,\xi,v(\tau,\xi))-y}\frac{1}
{D_y\eta(\tau,\xi,v(\tau,\xi))},
\end{eqnarray*}
which implies that
\begin{eqnarray*}
&&V_{\widetilde{f}}(\tau,\xi,a_v,X_v,p_v)+\frac{\varphi'_l(\eta(\tau,\xi,v(\tau,\xi)))}{D_y\eta(\tau,\xi,v(\tau,\xi))}\leq
0,\label{V1}
\end{eqnarray*}
and derives that $v$ satisfies (\ref{E:def2}). Hence, according to Corollary
\ref{corollary4.8}, $u$ is a stochastic viscosity
subsolution of MSPDIE \eqref{i11}. By similar arguments, one
can prove that $u$ is a stochastic viscosity supersolution of MSPDIE \eqref{i11}. This completes the proof.
\end{proof}



\begin{thebibliography}{}
\footnotesize

\bibitem{AM} A. Aman and N. Mrhardy, Obstacle problem for SPDE with nonlinear Neumann
boundary condition via reflected generalized backward doubly SDEs,
submited (2010)

\bibitem{MY} A. Aman and Y. Ren, Stochastic viscosity solution for stochastic PDIEs with nonlinear Neumann boundary condition,
submited (2010)

\bibitem{Ba1}K. Bahlali, El. Essaky and Y. Ouknine, Reflected backward stochastic differnetial equations
 with jumps and locally Lipschitz coefficient, Random Oper. Stoch.
 Equ. 10  335--350 (2002)

\bibitem{Ba2}K. Bahlali, El. Essaky and Y. Ouknine, Reflected backward stochastic differnetial equations
 with jumps and locally monotone coefficient, Stoch. Anal. Appl.
 22  939--970 (2004)


\bibitem{Bally} V. Bally and A. Matoussi, Weak solutions for SPDEs and backward doubly SDEs,
J. Theoret. Probab. 14 (2001) 125--164


\bibitem{B1} B. Boufoussi, J-V. Casteren and N. Mrhardy, Generalized backward doubly stochastic
differential equations and SPDEs with nonlinear Neumann boundary
conditions, Bernoulli 13  423--446 (2007)

\bibitem{B2} B. Boufoussi and N. Mrhardy, Multivalued stochastic partial differential equations via
backward doubly stochastic differential equations, Stoch. Dyna.
  8  271--294 (2008)

\bibitem{Br} H. Brezis, Op\'{e}ateurs maximaux monotones,
Mathematics studies,  North Holland, 1973

\bibitem{BMa} R. Buckdahn and J. Ma, Pathwise stochastic Taylor expansions and stochastic
viscosity solutions for fully nonlinear stochastic PDEs, Ann.
Probab. 30  1131--1171 (2002)

\bibitem {BM1} R. Buckdahn and J. Ma, Stochastic viscosity solutions for nonlinear stochastic
partial differential equations (Part II), Stochastic Process. Appl.
93  205--228 (2001)

\bibitem {BM2} R. Buckdahn and J. Ma, Stochastic viscosity solutions for nonlinear stochastic
partial differential equations (Part I), Stochastic Process. Appl.
93  181--204 (2001)

\bibitem{CIL} M. Crandall, H. Ishii and P.L. Lions, User's guide to the viscosity solutions of second order partial
differential equations, Bull. Amer. Math. Soc. 27  1--67 (1992)

\bibitem{DM} C. Dellacheie and P.A. Meyer, Probabilites et Potentiel, Paris, Hermann, 1980


\bibitem{Geg} A. Gegout-Petit and E. Pardoux, Equations diff\'{e}rentie-lles stochastiques r\'{e}trogrades refl\'{e}chies
dans un convexe, Stochastics Stochastics Rep.  57  111--128 (1996)


\bibitem{Gong} G. Gong,  An Introduction of  stochastic
differential equations, 2nd edition, Peking University of China,
Peking, 2000

\bibitem{Kunita} H. Kunita, Stochastic flows and stochastic
differential equations, Cambridge University Press, 1990.

\bibitem{Slo} W. Laukajtys and L. Slominski, Penalization methods for reflecting stochastic differential equations with jumps, Stochastics Stochastics Rep. 75
275--293 (2003).

\bibitem{Mou} A. Matoussi and M. Scheutzow, Stochastic PDEs driven by nonlinear noise and backward doubly SDEs,
J. Theoret. Probab. 15  1--39 (2002)

\bibitem{Mn1} M. N'zi and Y. Ouknine, Backward stochastic differential equations with jumps invoving a subdifferential
 operator, Random Oper. Stoch. Equ.
 8  319--338 (2000)

 \bibitem{Mn2} M. N'zi and Y. Ouknine, Probabilistic interpretation for integral-partial with subdifferential
 operator, Random Oper. Stoch. Equ. 9  87--101 (2000)

\bibitem{Nu1} D. Nualart and W. Schoutens, Chaotic and predictable
representation for L\'{e}vy processes, Stochastic Process. Appl. 90
 109--122 (2000)

\bibitem{Nu2} D. Nualart and W. Schoutens, Backward stochastic
differential equations and Feymann-Kac formula for L\'{e}vy
processes, with applications in finance, Bernoulli 5  761--776 (2001)

\bibitem{Ou} Y. Ouknine, Reflected BSDE with jumps, Stochastics Stochastics Rep.
65  111--125 (1998)

 \bibitem{PP1} E. Pardoux and S. Peng, Adapted solution of a backward
stochastic differential equation, Systems Control Lett. 14
55--61 (1990)

\bibitem{PP} E. Pardoux and S. Peng,  Backward doubly stochastic
differential equations and systems of quasilinear SPDE, Probab
Theory Related Fields 88  209--227 (1994)

\bibitem{PR} E. Pardoux and A. R\u{a}\c{s}canu, Backward stochastic differential equations with subdifferential
 operator and related variational inequalities, Stochastic Process.
 Appl. 76 (1998) 191--215

 \bibitem{Yal} Y. Ren, Q. Zhou and A. Aman, Multivalued stochastic Dirichlet-Neumann problems and
generalized backward doubly stochastic differential equations, submitted (2010)

\bibitem{ren3} Y. Ren and X. Fan, Reflected backward
stochastic differential equations driven by a L\'{e}vy process,
ANZIAM J. 50  486--500 (2009)



\bibitem{Ren2} Y. Ren, A. Lin and L. Hu, Stochastic PDIEs and backward
doubly stochastic differential equations driven by L\'{e}vy
processes, J. Comput. Appl. Math. 223  901--907 (2009)

 \bibitem{Ta} S. Tang and X. Li,  Necessary condition for optional control of stochastic system with random
 jumps,
 SIAM J. Control Optim. 32  1447--1475 (1994)

\bibitem{Zhang} Q. Zhang and H. Zhao,  Pathwise stationary solutions of stochastic partial differential equations and
 backward stochastic doubly stochastic differential equations on infinite
 horizon,
 J. Funct. Anal. 252 171--219 (2007)

\end{thebibliography}


\end{document}